\titleformat*{\section}{\large\bfseries}
\theoremstyle{definition}
\newtheorem{theorem}{Theorem}[section]
\newtheorem{lemma}[theorem]{Lemma}
\newtheorem{proposition}[theorem]{Proposition}
\newtheorem{remark}[theorem]{Remark}
\numberwithin{equation}{section}
\begin{document}
  \title{\Large{Asymptotic evaluations of generalized Bessel function of order zero related to the $p$-circle lattice point problem}}
  \author{Masaya Kitajima}
  \date{}
  \maketitle
  \begin{abstract}
  Let $p$ and $r$ be positive real numbers. Then, we consider the lattice point problem of the closed curve $p$-circle $\{x\in\mathbb{R}^{2}|\ |x_{1}|^{p}+|x_{2}|^{p}=r^{p}\}$ which is a generalization of the circle ($p=2$). Following the harmonic analytic approach of S. Kuratsubo and E. Nakai for the case of a circle, we need to investigate properties of appropriately generalized Bessel functions for $p$ in order to tackle the problem. Thus, in this paper, we derive asymptotic evaluations of the generalized Bessel function of order zero, such as uniformly asymptotic estimates on compact sets on quadrants of $\mathbb{R}^{2}$ for the cases $0<p<1$ or $p=2$, and, as stronger results, uniformly asymptotic estimates on $\mathbb{R}^{2}$ for the cases $p$ such that $\frac{2}{p}$ are the natural numbers other than $2$.\\
\textbf{Keywords:} Lattice point problem, Lam\'{e}'s curve, Bessel function, Oscillatory integral.\\
\textbf{2020 Mathematics Subject Classification:} 11P21, 33C10, 42B20.
  \end{abstract}
  
  \section{Introduction and main results}
  \hspace{13pt}For positive real numbers $p$ and $r$, we consider the lattice point problem of the $p$-circle (this closed curve called Lam\'{e}'s curve or superellipse) $\{x\in\mathbb{R}^{2}|\ |x_{1}|^{p}+|x_{2}|^{p}=r^{p}\}$, which is a generalization of the circle. That is, the problem is to find a value $\alpha_{p}$ such that $P_{p}(r)=\mathcal{O}(r^{\alpha_{p}})$ and $P_{p}(r)=\Omega(r^{\alpha_{p}})$ hold, where $P_{p}$ is the area error term between the $p$-circle and a mosaic-like approximated figure.
  \begin{equation}
  P_{p}(r):=R_{p}(r)-\frac{2}{p}\frac{\Gamma^{2}(\frac{1}{p})}{\Gamma(\frac{2}{p})}r^{2}.
  \end{equation}
  Note that, $R_{p}$ is the number of lattice points in the $p$-circle, and $\Gamma$ is the gamma function, and, for the functions $f$ and $g$, $f(t)=\mathcal{O}(g(t))$ and $f(t)=\Omega(g(t))$ respectively mean $\limsup_{t\to a}$$|\frac{f(t)}{g(t)}|<+\infty$, $\liminf_{t\to a}$$|\frac{f(t)}{g(t)}|>0$ ($a$ is $\infty$ or constant). \par\vspace{5pt}
  In particular, in the case $p=2$ (that is, when the closed curve is a circle), it is called Gauss's circle problem\cite{Gauss}. In 1917, G.H. Hardy\cite{Hardy-1917} had conjectured the infimum of $\mathcal{O}$-estimates as
  \begin{equation}\label{Hardy}
    P_{2}(r)=\mathcal{O}(r^{\frac{1}{2}+\varepsilon}),\quad \neq\mathcal{O}(r^{\frac{1}{2}})
    \quad\text{as }r\to\infty
  \end{equation}
  for any $\varepsilon>0$ small enough, and M.N. Huxley\cite{Huxley-2003} showed that this evaluation holds for any $\varepsilon$ approximately greater than $\frac{27}{208}(=0.1298\cdots)$ in 2003. 
  
  \begin{figure}[t]
    \centering
    \includegraphics[width=0.8\linewidth]{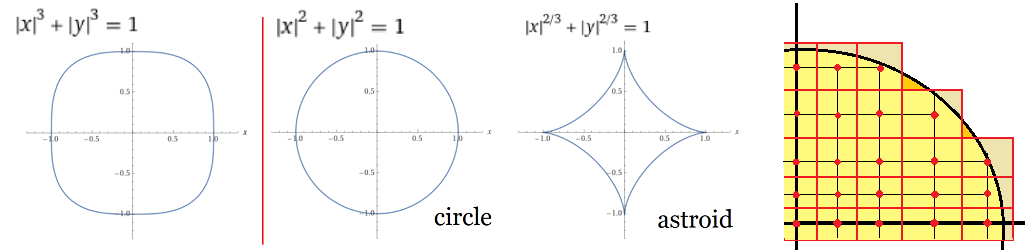}
    \label{fig}
    \caption{Examples of the $p$-circle and the approximation by unit squares.}
  \end{figure}
  
  \par\vspace{5pt}
  On the other hand, in the cases $p>2$, the following important theorem by E. Kr\"{a}tzel is given by the representation (\cite{Kratzel}, (3.57))
  \begin{equation}
    P_{p}(r)=\Psi(r;p)+\Delta(r;p)
  \end{equation}
  decomposed by the second main term $\Psi(r;p)$ (\cite{Kratzel}, (3.55)), which is represented as the series expansion consisting of the generalized Bessel functions (\cite{Kratzel}, Definition 3.3). It can be seen that the application of the result in 1993 by G. Kuba \cite{Kuba} (the $\mathcal{O}$-estimate of the remainder term $\Delta(r;p)$) have solved the problem in the cases $p>\frac{73}{27}$.   
  \begin{theorem}[\itshape{\cite{Kratzel}, Theorem 3.17 A}]\label{upper}
    \itshape{Let $p>2$. If $\alpha_{p}<1-\frac{1}{p}$ such that $\Delta(r;p)=\mathcal{O}(r^{\alpha_{p}})$ exists, then $P_{p}(r)=\mathcal{O}(r^{1-\frac{1}{p}}),\Omega(r ^{1-\frac{1}{p}})$ holds.}
  \end{theorem}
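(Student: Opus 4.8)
The plan is to follow Kr\"atzel's route, in which the whole statement is reduced to an analysis of the second main term $\Psi(r;p)$. Since the hypothesis supplies an exponent $\alpha_p<1-\tfrac1p$ with $\Delta(r;p)=\mathcal{O}(r^{\alpha_p})$, the remainder satisfies $\Delta(r;p)=o(r^{1-1/p})$ as $r\to\infty$; hence, from $P_p(r)=\Psi(r;p)+\Delta(r;p)$, it is enough to establish
\[
 \Psi(r;p)=\mathcal{O}(r^{1-1/p})\quad\text{and}\quad\Psi(r;p)=\Omega(r^{1-1/p})\qquad(r\to\infty),
\]
after which $P_p(r)=\mathcal{O}(r^{1-1/p})$ is immediate, and $P_p(r)=\Omega(r^{1-1/p})$ follows along any sequence on which $\Psi$ is comparable to $r^{1-1/p}$.

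For the upper bound I would substitute the asymptotic expansion of the generalized Bessel function of order zero into the series representation of $\Psi(r;p)$ given in (\cite{Kratzel}, (3.55)). Writing that series as $\Psi(r;p)=\sum_{n\ge1}c_n(r)$, the order-zero asymptotics (for $p>2$, where the flat points of the $p$-circle govern the decay rate) produce a leading term of the shape $c_n(r)=d_n\,r^{1-1/p}\cos(2\pi nr-\gamma_p)+e_n(r)$, with a fixed phase $\gamma_p$, coefficients $d_n$ for which $\sum_{n\ge1}|d_n|<\infty$ (the governing exponent being $1+\tfrac1p>1$), and error terms $e_n(r)$ of strictly smaller order in $r$ that remain summable in $n$, uniformly for all large $r$. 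Summation then gives $|\Psi(r;p)|\le r^{1-1/p}\sum_{n\ge1}|d_n|+o(r^{1-1/p})=\mathcal{O}(r^{1-1/p})$. Care is needed only with the uniformity in $n$ of the Bessel remainder, so that the term-by-term expansion may legitimately be summed, and with the regime of small $n$ where the argument $2\pi nr$ is not yet large; the latter is harmless because $r\to\infty$.

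For the lower bound I would isolate the dominant contribution $T(r):=d_1\,r^{1-1/p}\cos(2\pi r-\gamma_p)$. Along a sequence $r_k\to\infty$ chosen so that $\cos(2\pi r_k-\gamma_p)$ stays bounded away from $0$, one has $|T(r_k)|\ge c\,r_k^{1-1/p}$ with $c>0$. The rest, $\Psi(r;p)-T(r)=\sum_{n\ge2}c_n(r)+e_1(r)$, is bounded in absolute value by $r^{1-1/p}\sum_{n\ge2}|d_n|+o(r^{1-1/p})$; restricting to those $r_k$ at which the trigonometric series $r\mapsto\sum_{n\ge1}d_n\cos(2\pi nr-\gamma_p)$ is near its (positive) supremum---positive because the first Fourier coefficient $d_1\ne0$ prevents total cancellation---one obtains $|\Psi(r_k;p)|\ge c'\,r_k^{1-1/p}$ with $c'>0$. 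Since $\Delta(r_k;p)=\mathcal{O}(r_k^{\alpha_p})=o(r_k^{1-1/p})$, this gives $|P_p(r_k)|\ge\tfrac{c'}{2}r_k^{1-1/p}$ for large $k$, i.e. $P_p(r)=\Omega(r^{1-1/p})$.

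The step I expect to be the main obstacle is the interchange of the sum over $n$ with the asymptotic expansion of the generalized Bessel function of order zero: what is required is that expansion with a remainder whose dependence on the argument is explicit and uniform enough that the $n$-tail is genuinely $o(r^{1-1/p})$ rather than merely $\mathcal{O}(r^{1-1/p})$ with an uncontrolled constant. This is exactly the type of uniform asymptotic estimate for the generalized Bessel function that the present paper is designed to furnish; granted it, the two displayed bounds reduce to routine manipulations of absolutely convergent trigonometric series, and the only remaining (elementary) point is the non-vanishing of the leading coefficient $d_1$, which keeps the dominant oscillatory series from being identically zero.
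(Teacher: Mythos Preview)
The paper does not contain a proof of this statement. Theorem~1.1 is quoted from Kr\"atzel's monograph (\cite{Kratzel}, Theorem~3.17~A) as background for the case $p>2$, and the paper immediately moves on to its own subject, the cases $0<p<2$. There is therefore no ``paper's own proof'' against which to compare your proposal.

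That said, your outline is broadly faithful to Kr\"atzel's scheme: reduce to $\Psi(r;p)$ via the hypothesis on $\Delta(r;p)$, then extract the $r^{1-1/p}$ behaviour of $\Psi$ from the asymptotics of the one-variable generalized Bessel functions in Kr\"atzel's Definition~3.3. One caution: the generalized Bessel functions appearing in Kr\"atzel's $\Psi(r;p)$ are his univariate functions, not the bivariate $J_{0}^{[p]}$ of the present paper, and their asymptotics for $p>2$ (including the precise exponent $1-1/p$ coming from the flat points of the $p$-circle) are established directly in \cite{Kratzel}; the uniform estimates developed here concern a different object and a different range of $p$, so they are not what you would invoke. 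Your identification of the main technical point---uniform control of the Bessel remainder so that summation over $n$ is legitimate---is correct, and Kr\"atzel handles it in his book; beyond that, the $\Omega$-argument via a nonvanishing leading Fourier coefficient is the standard one.
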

  \par\vspace{8pt}
  Now, we consider the remaining cases $0<p<2$. The conventional method in the cases $p\geq2$ deals with the second-order derivative $x_{2}''(x_{1})$ for $x_{2}>0$ with $r$ fixed. In the cases $0<p<2$, however, we need to approach the problem from another method, since singularities appear in this derivative. \par\vspace{7pt}
  As a starting point, we focus on the harmonic analytic approach to the problem for the case $p=2$ by S. Kuratsubo and E. Nakai\cite{Kuratsubo-2022}, in particular their very important equality (\cite{Kuratsubo-2022}, (2.6))
  \begin{equation}\label{D-J}
      D_{\beta}(s:x)-\mathcal{D}_{\beta}(s:x)=s^{\beta+1}2^{\beta+1}\pi\sum_{ n\in\mathbb{Z}^{2}\setminus\{0\}}\frac{J_{\beta+1}(2\pi\sqrt{s}|x-n|)}{(2\pi\sqrt{s}|x-n|)^{\beta+1}}\qquad\text{if }\beta>\frac{1}{2}.
  \end{equation}
  Note that, for $\beta>-1,\ s>0,\ x\in\mathbb{R}^{2}$, the functions are defined as 
  \begin{equation}\label{D}
    D_{\beta}(s:x):=\frac{1}{\Gamma(\beta+1)}\sum_{|m|^{2}<s}(s-|m|^{2})^{\beta}
    e^{2\pi ix\cdot m},\quad 
    \mathcal{D}_{\beta}(s:x):=\frac{1}{\Gamma(\beta+1)}\int_{|\xi|^{2}<s}
    (s-|\xi|^{2})^{\beta}e^{2\pi ix\cdot \xi}d\xi,
  \end{equation}
  and in particular, $D_{0}(r^{2}:0)-\mathcal{D}_{0}(r^{2}:0)=P_{2}(r)$ holds. Kuratsubo and Nakai gave \textit{a harmonic analytic claim equivalent to the Hardy's conjecture (\ref{Hardy})} by using this series representation (\ref{D-J}) via various properties of the Bessel functions (see Theorem 7.1 in \cite{Kuratsubo-2022}). On the other hand, as a byproduct of the process, the following evaluation formula is obtained. 
   \begin{proposition}[\itshape{\cite{Kuratsubo-2022}; Lemma 5.1(Special cases)}]\label{prop2022}\vspace{-3pt}
    \itshape{Under the above definitions of the functions,}
    \begin{equation*}\vspace{-2pt}
      D_{0}(s:x)-\mathcal{D}_{0}(s:x)=\mathcal{O}(s^{\frac{1}{3}})
      \quad\text{as }s\to\infty
    \end{equation*}
    \itshape{uniformly with respect to $x\in(-\frac{1}{2},\frac{1}{2}]^{2}$. In particular, 
    $P_{2}(r)=\mathcal{O}(r^{\frac{2}{3}})$ holds.}
    \end{proposition}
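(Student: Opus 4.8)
The plan is to use (\ref{D-J}) at the parameter $\beta=1$, where it converges absolutely and uniformly, to differentiate it in $s$ and thereby reach the (only conditionally convergent) expansion of $D_{0}-\mathcal D_{0}$ that is formally ``(\ref{D-J}) with $\beta=0$'', and then to estimate that expansion by peeling off a truncated main sum from a tail, the tail being controlled by a van der Corput/Poisson-summation estimate for an exponential sum over the shifted lattice.

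I would first record the elementary geometric fact that for $x\in(-\tfrac12,\tfrac12]^{2}$ and $n\in\mathbb Z^{2}\setminus\{0\}$ one has $|x-n|\ge\tfrac12$ (the value $\tfrac12$ being attained at $x=(\tfrac12,0)$, $n=(1,0)$), so that every argument $2\pi\sqrt{s}\,|x-n|$ occurring in (\ref{D-J}) exceeds $1$ as soon as $s\ge\pi^{-2}$; combined with $|J_{\nu}(t)|\le C_{\nu}(1+t)^{-1/2}$ and the bound $\#\{n\in\mathbb Z^{2}:|x-n|\le R\}\le CR^{2}$ (uniform in $x$), this makes the series in (\ref{D-J}) with $\beta=1$ absolutely and uniformly convergent. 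Starting from $D_{1}(s:x)=\int_{0}^{s}D_{0}(u:x)\,du$ and the analogous identity for $\mathcal D_{1}$ (both immediate from (\ref{D})), the Bessel relation $2J_{2}(t)+tJ_{2}'(t)=tJ_{1}(t)$, and termwise differentiation in $s$ of (\ref{D-J}) with $\beta=1$, one arrives, for every $s$ outside the discrete set $\{\,|x-n|^{2}:n\in\mathbb Z^{2}\,\}$, at
\[
  D_{0}(s:x)-\mathcal D_{0}(s:x)=\sqrt{s}\sum_{n\in\mathbb Z^{2}\setminus\{0\}}\frac{J_{1}\!\left(2\pi\sqrt{s}\,|x-n|\right)}{|x-n|},
\]
the right-hand side read through its symmetric partial sums; this is exactly (\ref{D-J}) ``at $\beta=0$''.

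Fix now a cut-off parameter $Y\ge1$ and split the sum at $|x-n|\le Y$. For the truncated part, $|J_{1}(t)|\le Ct^{-1/2}$ and the annular count give, uniformly in $x$,
\[
  \Bigl|\,\sqrt{s}\sum_{0<|x-n|\le Y}\frac{J_{1}(2\pi\sqrt{s}\,|x-n|)}{|x-n|}\,\Bigr|\ \le\ C\,s^{1/4}\!\!\!\sum_{\frac12\le|x-n|\le Y}\!\!\!|x-n|^{-3/2}\ \le\ C'\,s^{1/4}Y^{1/2}.
\]
For the tail $|x-n|>Y$ I would substitute $J_{1}(t)=\sqrt{2/\pi t}\,\cos(t-\tfrac{3\pi}{4})+O(t^{-3/2})$: the remainder contributes $O(s^{-1/4}Y^{-1/2})$, while the main term, after collecting the lattice points with equal $|x-n|$ and writing the resulting sum as a Stieltjes integral against $\nu(t):=\#\{n\in\mathbb Z^{2}:|x-n|\le t\}=\pi t^{2}+O(t)$, becomes, apart from a harmless smooth term of size $O(s^{-1/4}Y^{-1/2})$, a summation by parts applied to the exponential sum $G(t):=\sum_{Y<|x-n|\le t}e^{2\pi i\sqrt{s}\,|x-n|}$. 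The crucial input is the uniform bound
\[
  |G(t)|\ \le\ C\bigl(s^{1/4}t^{1/2}+t\,s^{-1/2}\bigr),
\]
which I would establish by Poisson summation (equivalently van der Corput's $B$-process): after shifting, $G(t)$ is a Fourier series in $x$ whose dual lattice sum localizes to frequencies of size $\approx\sqrt{s}$, and the one-dimensional oscillatory integrals that remain produce the gain $s^{1/4}$ (and, incidentally, the uniformity in $x$). Feeding this back through the summation by parts bounds the tail by $C\,s^{1/2}Y^{-1}$, whence $|D_{0}(s:x)-\mathcal D_{0}(s:x)|\le C\bigl(s^{1/4}Y^{1/2}+s^{1/2}Y^{-1}\bigr)$ uniformly in $x$; choosing $Y\asymp s^{1/6}$ balances the two terms and gives $|D_{0}(s:x)-\mathcal D_{0}(s:x)|\le Cs^{1/3}$ on $(-\tfrac12,\tfrac12]^{2}$. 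Specializing to $x=0$ and $s=r^{2}$ then yields $P_{2}(r)=\mathcal O(r^{2/3})$.

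The step I expect to be the main obstacle is the uniform exponential-sum estimate for $G(t)$: because the expansion of $D_{0}-\mathcal D_{0}$ is only conditionally convergent, the tail cannot be treated by absolute values, genuine cancellation must be extracted, and it is the sharp form of that van der Corput/Poisson bound — in particular its being free of logarithmic factors and uniform in $x$ — that pins down the exponent $\tfrac13$ (equivalently $\tfrac23$ for $P_{2}$); a crude estimate would give only $\mathcal O(r^{2/3}\log r)$ or worse. The remaining, more routine, point is to make the passage to $\beta=0$ rigorous, i.e. to justify the termwise $s$-differentiation and, above all, to handle the conditionally convergent tail without circularity; this is done by comparing $D_{1}-\mathcal D_{1}$ with its partial sums on $s$-intervals avoiding $\{\,|x-n|^{2}\,\}$ and invoking $\tfrac{d}{ds}D_{1}(s:x)=D_{0}(s:x)$ off that set.
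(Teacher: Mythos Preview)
The paper does not contain its own proof of this proposition: it is quoted verbatim as a special case of Lemma~5.1 of \cite{Kuratsubo-2022}, and the surrounding text makes clear that it is a byproduct of Kuratsubo--Nakai's method rather than something established here. So there is nothing in the present paper to compare your argument against.

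That said, your outline is the classical Hardy--Landau--Voronoi route, and it is essentially the strategy Kuratsubo and Nakai follow: pass from the absolutely convergent expansion at $\beta=1$ to the conditionally convergent one at $\beta=0$, split the resulting Bessel sum at a scale $Y$, treat the head by the pointwise bound $|J_{1}(t)|\ll t^{-1/2}$, and extract cancellation in the tail via an exponential-sum estimate, balancing to $Y\asymp s^{1/6}$. The arithmetic of your balance is correct and yields $s^{1/3}$.

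The one place I would flag is your bound for $G(t)$. As stated it is a sum over lattice points in an annulus with phase $2\pi\sqrt{s}\,|x-n|$; after grouping by the value of $|x-n|^{2}$ you are effectively summing $r_{x}(k)e^{2\pi i\sqrt{sk}}$ over integers $k$, where $r_{x}(k)$ counts shifted-lattice points on the circle of radius $\sqrt{k}$. Since $r_{x}(k)$ is not uniformly bounded (only on average), a naive van der Corput second-derivative estimate gives $s^{1/4}t^{1/2}+s^{-1/4}t^{3/2}$ rather than your $s^{1/4}t^{1/2}+ts^{-1/2}$, and one has to combine it with partial summation against the error term in $\nu(t)$ (or, equivalently, run the $B$-process/Poisson step carefully) to get a tail of size $s^{1/2}Y^{-1}$. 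You clearly anticipate this (``the crucial input \ldots\ I would establish by Poisson summation''), but this is exactly where the uniformity in $x$ and the absence of a logarithm must be earned, so in a full write-up it deserves a complete argument rather than a one-line invocation.
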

  \vspace{5pt}
  
  Therefore, as the future goal, following Kuratsubo and Nakai's method, we have decided to improve $\mathcal{O}$-estimates of the error terms $P_{p}$ (generalization of Proposition \ref{prop2022}), particularly for the cases $0<p<1$ (for which only rough evaluation formulas have been obtained) by investigating the following functions (the functions (\ref{D}) generalized by $p$-norm $|\xi|_{p}(:=(|\xi_{1}|^{p}+|\xi_{2}|^{p})^{\frac{1}{p}})$).
   \begin{equation}\label{Dp}
    D_{\beta}^{[p]}(s:x):=\frac{1}{\Gamma(\beta+1)}\sum_{|m|_{p}^{p}<s}(s-|m|_{p}^{p})^{\beta}e^{2\pi ix\cdot m},\quad 
    \mathcal{D}_{\beta}^{[p]}(s:x):=\frac{1}{\Gamma(\beta+1)}\int_{|\xi|_{p}^{p}<s}
    (s-|\xi|_{p}^{p})^{\beta}e^{2\pi ix\cdot \xi}d\xi.
  \end{equation}
  \hspace{13pt}Then, we attempt to conduct a study based on the following series expansion (generalization of (\ref{D-J})), which consists of our generalized Bessel functions of order $\omega\geq0$ (\cite{K1}, Definition 2.5).
    \begin{equation}\label{I-Om}
      J_{\omega}^{[p]}(\eta):=
      \begin{cases}
        \frac{1}{\Gamma^{2}(\frac{1}{p})}\left(\frac{2}{p}
    \right)^{2}\int_{0}^{1}\cos(\eta_{1}t^{\frac{1}{p}})\cos(\eta_{2}(1-t)^{\frac{1}
    {p}})t^{\frac{1}{p}-1}(1-t)^{\frac{1}{p}-1}dt\qquad\text{if }\omega=0,\\
        \frac{|\eta|_{p}^{\omega}}{p^{\omega-1}\Gamma(\omega)}\int_{0}^{1}
        J_{0}^{[p]}(\tau\eta)\tau(1-\tau^{p})^{\omega-1}
        d\tau\qquad\text{if }\omega>0.
      \end{cases}
    \end{equation}
  \begin{theorem}[\itshape{\cite{K1}, Theorem 1.3}]\label{thm}
  \itshape{Let $p>0$. If $\beta>-1$ satisfies that $\mathcal{D}_{\beta}^{[p]}(1:x)$ is integrable on $\mathbb{R}^{2}$, then}\vspace{-8pt}
  \begin{equation*}
    D_{\beta}^{[p]}(s:x)-\mathcal{D}_{\beta}^{[p]}(s:x)=s^{\beta+\frac{2}{p}}p^{\beta+1}
    \Gamma^{2}(\frac{1}{p})\sum_{n\in\mathbb{Z}^{2}\setminus\{0\}}\frac{J_{\beta+1}^{[p]}
    (2\pi\sqrt[p]{s}(x-n))}{(2\pi \sqrt[p]{s}|x-n|_{p})^{\beta+1}}
    \qquad\text{for }s>0,\ x\in\mathbb{R}^{2}.\vspace{-5pt}
  \end{equation*}
  Furthermore, under this assumption, the series converges absolutely for $x\in\mathbb{T}^{2}(:=(-\frac{1}{2},\frac{1}{2}]^{2})$.
  \end{theorem}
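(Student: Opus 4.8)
The plan is to recognize $D_\beta^{[p]}(s:\cdot)-\mathcal D_\beta^{[p]}(s:\cdot)$ as essentially a periodization, apply the Poisson summation formula, and then rewrite each summand via a scaling relation together with the integral representation (\ref{I-Om}). Fix $s>0$ and set $f_s(\xi):=\tfrac1{\Gamma(\beta+1)}(s-|\xi|_p^p)_+^\beta$, where $(a)_+:=\max\{a,0\}$. Since $\beta>-1$, the weight has an integrable boundary singularity over the bounded set $\{|\xi|_p^p<s\}$, so $f_s$ is an even, compactly supported function in $L^1(\mathbb R^2)$; its Fourier transform is, by evenness of the weight, exactly $\mathcal D_\beta^{[p]}(s:\cdot)$ (the two sign conventions agreeing), and $D_\beta^{[p]}(s:x)=\sum_{m\in\mathbb Z^2}f_s(m)e^{2\pi ix\cdot m}$ is a \emph{finite} trigonometric sum. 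The modulated Poisson summation formula $\sum_m f_s(m)e^{2\pi ix\cdot m}=\sum_n\widehat{f_s}(n-x)$, together with the evenness of $\widehat{f_s}=\mathcal D_\beta^{[p]}(s:\cdot)$, then gives $D_\beta^{[p]}(s:x)=\sum_{n\in\mathbb Z^2}\mathcal D_\beta^{[p]}(s:x-n)$, and removing the $n=0$ term isolates $D_\beta^{[p]}(s:x)-\mathcal D_\beta^{[p]}(s:x)=\sum_{n\in\mathbb Z^2\setminus\{0\}}\mathcal D_\beta^{[p]}(s:x-n)$.

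Two further identities convert each term to the form in the statement. First, the substitution $\xi=\sqrt[p]{s}\,u$ in (\ref{Dp}) gives the homogeneity $\mathcal D_\beta^{[p]}(s:y)=s^{\beta+\frac2p}\mathcal D_\beta^{[p]}(1:\sqrt[p]{s}\,y)$ (in particular $\|\mathcal D_\beta^{[p]}(s:\cdot)\|_{L^1}=s^\beta\|\mathcal D_\beta^{[p]}(1:\cdot)\|_{L^1}<\infty$ under the hypothesis). Second, I would prove $\mathcal D_\beta^{[p]}(1:z)=p^{\beta+1}\Gamma^2(\tfrac1p)\,J_{\beta+1}^{[p]}(2\pi z)/(2\pi|z|_p)^{\beta+1}$ by introducing ``$p$-polar coordinates'': in the first quadrant write $u=(\rho\,t^{1/p},\rho(1-t)^{1/p})$ with $\rho=|u|_p\in(0,1)$ and $t\in(0,1)$, whose Jacobian has magnitude $\tfrac\rho p\,t^{1/p-1}(1-t)^{1/p-1}$; summing over the four quadrants turns the exponential into $4\cos(2\pi z_1\rho t^{1/p})\cos(2\pi z_2\rho(1-t)^{1/p})$, and inserting the $\omega=0$ case of (\ref{I-Om}) for the resulting $t$-integral yields
\[
\mathcal D_\beta^{[p]}(1:z)=\frac{p\,\Gamma^2(\tfrac1p)}{\Gamma(\beta+1)}\int_0^1 J_0^{[p]}(2\pi\rho z)\,\rho\,(1-\rho^p)^\beta\,d\rho,
\]
which is exactly the $\omega=\beta+1$ case of (\ref{I-Om}) written out (substitute $\tau=\rho$). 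Taking $y=x-n$ and $z=\sqrt[p]{s}\,(x-n)$ and combining with the previous paragraph gives the displayed formula of the theorem.

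The remaining work --- and the main obstacle --- is to justify the Poisson summation step rigorously and to prove the asserted absolute convergence. The ``left'' periodization causes no trouble, being a finite sum, so everything concentrates on $\sum_n\mathcal D_\beta^{[p]}(s:x-n)$; the integrability hypothesis by itself only secures convergence in $L^1(\mathbb T^2)$ and a.e.\ equality with $D_\beta^{[p]}(s:\cdot)$. To upgrade this to pointwise-everywhere validity and to absolute convergence for every $x\in\mathbb T^2$ (whence, by the $\mathbb Z^2$-translation behaviour of both sides, for every $x\in\mathbb R^2$), I would invoke a pointwise decay bound $|\mathcal D_\beta^{[p]}(1:z)|\le C(1+|z|_p)^{-(2+\delta)}$ with some $\delta>0$, uniform in all directions. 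Here the delicate point is that $f_s$ has only the regularity $\beta>-1$ allows and, for $p\neq2$, the weight $|\xi|_p^p$ is non-smooth on the coordinate axes (the $p$-sphere has vanishing curvature there for $p>2$ and a corner for $0<p<1$), so the classical smooth–rapid-decay form of Poisson summation does not apply and one must route the argument through quantitative Fourier decay of $\mathcal D_\beta^{[p]}(1:\cdot)$; obtaining that decay uniformly up to the coordinate-axis directions, where the oscillatory integral for $\mathcal D_\beta^{[p]}(1:\cdot)$ degenerates, and checking that the resulting summability exponent exceeds $2$ precisely when $\mathcal D_\beta^{[p]}(1:\cdot)\in L^1$, is the technical heart of the matter. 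By comparison, the $p$-polar computation of the second paragraph is bookkeeping once the normalizations in (\ref{I-Om}) are pinned down.
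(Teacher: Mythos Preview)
This theorem is not proved in the present paper; it is quoted from the author's earlier preprint \cite{K1}, so there is no in-paper proof against which to compare your proposal directly. That said, your outline---Poisson summation applied to the compactly supported $L^1$ weight $f_s$, followed by the scaling law $\mathcal D_\beta^{[p]}(s:y)=s^{\beta+2/p}\mathcal D_\beta^{[p]}(1:s^{1/p}y)$ and the $p$-polar identification $\mathcal D_\beta^{[p]}(1:z)=p^{\beta+1}\Gamma^2(1/p)\,J_{\beta+1}^{[p]}(2\pi z)/(2\pi|z|_p)^{\beta+1}$---is the natural route, and your $p$-polar Jacobian computation is correct; the resulting identity is exactly what the present paper cites as \cite{K1}, Proposition~3.1 in Section~3.

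You have also correctly isolated the genuine difficulty. The hypothesis is bare integrability of $\mathcal D_\beta^{[p]}(1:\cdot)$, yet the conclusion asserts absolute convergence of $\sum_{n\ne0}\mathcal D_\beta^{[p]}(s:x-n)$ at \emph{every} point of $\mathbb T^2$; $L^1$ membership alone gives only a.e.\ convergence of the periodization, not pointwise-everywhere absolute convergence. Your proposed remedy---a uniform bound $|\mathcal D_\beta^{[p]}(1:z)|\lesssim(1+|z|_p)^{-(2+\delta)}$---is strictly stronger than the stated hypothesis, so as written your argument would establish a theorem with a stronger assumption. Whether, for this specific family, integrability is in fact equivalent to such decay (so that the theorem as stated is recoverable) is precisely the programme the author sketches in Section~3: there the uniform estimates $J_\omega^{[p]}(x)=\mathcal O(|x|_p^{-q_\omega^{[p]}})$ are used to pin down simultaneously the integrability threshold and the summability. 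In short, your strategy and computations are sound, and your caveat about the convergence step is well placed; the honest closure of that gap is deferred to \cite{K1} and to the asymptotic analysis that is the subject of the present paper.
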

  For this difficult goal, it is necessary to investigate various properties of $J_{\omega}^{[p]}$, and in this paper we obtain the following asymptotic evaluations (including the well-known evaluation $J_{0}(r)=\mathcal{O}(r^{-\frac{1}{2}})$), which are particularly important properties.
   
  \begin{theorem}[\itshape{Uniformly asymptotic estimates on compact sets on quadrants}]\label{cpt.uni}\ \\
  \itshape{Let $0<p<1$ or $p=2$. For $\eta\ (:=(\eta_{1},\eta_{2}))\in\mathbb{R}^{2}$, if we denote 
  \begin{equation*}
    \eta_{1}=\mathrm{sgn}(\cos\varphi)|\eta|_{p}|\cos\varphi|^{\frac{2}{p}},\quad
    \eta_{2}=\mathrm{sgn}(\sin\varphi)|\eta|_{p}|\sin\varphi|^{\frac{2}{p}},\quad
    (0\leq\varphi<2\pi)
  \end{equation*}
  then the following holds uniformly with respect to $\varphi$
   in any compact sets on $[0,2\pi)\setminus\{0,\frac{\pi}{2},\pi,\frac{3}{2}\pi\}$.}
  \begin{equation*}
    J_{0}^{[p]}(\eta)=\mathcal{O}(|\eta|_{p}^{-\frac{1}{2}})\qquad\text{as }|\eta|_{p}\to\infty.
  \end{equation*}
  \end{theorem}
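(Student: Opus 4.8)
The plan is to express $J_0^{[p]}(\eta)$ as a sum of two one–dimensional oscillatory integrals and to run the van der Corput/stationary phase machinery, keeping every bound controlled by constants that depend only on the compact $\varphi$–set. Since $\cos$ is even, $J_0^{[p]}(\eta)$ depends only on $|\eta_1|$ and $|\eta_2|$, so it suffices to treat $\eta_1,\eta_2\ge 0$; then $\eta_1=R\cos^{2/p}\varphi$, $\eta_2=R\sin^{2/p}\varphi$ with $R:=|\eta|_p\to\infty$ and $\varphi$ ranging over a compact subset of $(0,\pi/2)$, so that $\cos\varphi$ and $\sin\varphi$ stay bounded away from $0$ and $1$ (the general compact set in $[0,2\pi)\setminus\{0,\pi/2,\pi,3\pi/2\}$ reduces to this via $\varphi\mapsto(|\cos\varphi|,|\sin\varphi|)$). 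First I would substitute $t=u^p$, which replaces the weight $t^{1/p-1}\,dt$ by $p\,du$ and leaves the amplitude $(1-u^p)^{1/p-1}$; this amplitude is smooth and bounded on $[0,1]$ when $0<p<1$, while for $p=2$ it is smooth on $[0,1)$ with only the integrable singularity $\sim(1-u)^{-1/2}$ at $u=1$. The identity $\cos A\cos B=\tfrac12(\cos(A+B)+\cos(A-B))$ then rewrites $J_0^{[p]}(\eta)$, up to a harmless constant, as $\tfrac12(I_++I_-)$ with
\[
  I_\pm=\int_0^1\cos\bigl(\eta_1 u\pm\eta_2(1-u^p)^{1/p}\bigr)(1-u^p)^{1/p-1}\,du .
\]

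Next comes the critical–point analysis. Set $h(u):=u^{p-1}(1-u^p)^{1/p-1}$; the phases $\Phi_\pm(u):=\eta_1 u\pm\eta_2(1-u^p)^{1/p}$ satisfy $\Phi_\pm'=\eta_1\mp\eta_2 h$ and $\Phi_\pm''=\mp\eta_2 h'$, and a short computation gives $(\log h)'(u)=\frac{p-1}{u(1-u^p)}$, whence $h$ is strictly monotone on $(0,1)$ and maps it onto $(0,\infty)$. Consequently $\Phi_-'=\eta_1+\eta_2 h\ge\eta_1\gtrsim R$ has no zero, while $\Phi_+'$ vanishes at exactly one point $u_c$, determined by $h(u_c)=\eta_1/\eta_2=\cot^{2/p}\varphi$. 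Because $\varphi$ stays in a compact subset of $(0,\pi/2)$, the ratio $\eta_1/\eta_2$ lies in a fixed compact subset of $(0,\infty)$, and since $h$ is a homeomorphism onto $(0,\infty)$ the point $u_c$ stays in a fixed compact subinterval $[\delta_1,1-\delta_1]\subset(0,1)$; moreover $|h'|$ is bounded below by a positive constant on $[\delta_1/2,1-\delta_1/2]$, so $|\Phi_+''|=\eta_2|h'|\gtrsim R$ there, while monotonicity of $h$ forces $|\Phi_+'|\gtrsim R$ on $[0,\delta_1/2]$ and on $[1-\delta_1/2,1]$ — all uniformly in $\varphi$.

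Then I would assemble the estimates. Since $\Phi_\pm'$ are monotone (as $h'$ has a fixed sign), van der Corput's first–derivative test applies to the pieces with no stationary point. For $I_-$ and for the two outer pieces of $I_+$ one gets $\mathcal O(R^{-1})$ when $0<p<1$; for $p=2$ one first peels off the window $[1-R^{-1},1]$, bounded trivially by $\int_{1-R^{-1}}^{1}(1-u)^{-1/2}\,du\lesssim R^{-1/2}$, and applies the first–derivative test on the complement, where the amplitude has supremum and variation $\lesssim R^{1/2}$, again giving $\mathcal O(R^{-1/2})$. On the middle piece $[\delta_1/2,1-\delta_1/2]$ of $I_+$ the amplitude is $C^1$ with norm bounded uniformly in $\varphi$ and $|\Phi_+''|\gtrsim R$, so van der Corput's second–derivative test gives $\mathcal O(R^{-1/2})$. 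Adding up, $J_0^{[p]}(\eta)=\mathcal O(R^{-1/2})=\mathcal O(|\eta|_p^{-1/2})$ uniformly on the prescribed compact sets.

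The main obstacle is the \emph{uniformity}, not any individual estimate: one must pin down, with constants depending only on the compact $\varphi$–set, that $u_c\in[\delta_1,1-\delta_1]$, that $|\Phi_+'|\gtrsim R$ away from $u_c$, that $|\Phi_+''|\gtrsim R$ near it, and that $\Phi_\pm'$ are monotone. Everything here is traced back to the explicit monotonicity of $h$ and the explicit relation $h(u_c)=\cot^{2/p}\varphi$, which also makes transparent why the directions $\varphi\in\{0,\pi/2,\pi,3\pi/2\}$ must be excluded: there $u_c$ runs to an endpoint of $(0,1)$ and, when $p=2$, collides with the amplitude singularity. The remaining technical nuisance is precisely that singularity at $u=1$ for $p=2$, dealt with throughout by the length–$R^{-1}$ splitting.
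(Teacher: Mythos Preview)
Your argument is correct and gives a genuinely different, somewhat more streamlined packaging than the paper's. The paper first passes to the trigonometric parametrisation $t=\cos^2\theta$, obtaining four oscillatory integrals over $[0,\pi/2]$ with amplitude $\psi^{[p]}(\theta)=(\cos\theta\sin\theta)^{2/p-1}$ and phases $\pm f_{p,\varphi},\pm g_{p,\varphi}$; it then locates the stationary points by explicit trigonometric computation, uses that $\psi^{[p]}$ vanishes at the endpoint stationary points $\theta=0,\pi/2$ when $0<p<1$, and applies a full stationary--phase expansion (their Lemma~2.5) together with a bare integration--by--parts lemma (their Lemma~2.3). Your substitution $t=u^p$ instead keeps the analysis on $[0,1]$ with amplitude $(1-u^p)^{1/p-1}$, and the clean identity $(\log h)'(u)=(p-1)/(u(1-u^p))$ makes the uniqueness and $\varphi$--uniform location of the critical point of $\Phi_+$ transparent in one stroke, after which only the van~der~Corput bounds (not the full asymptotic) are needed. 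The trade--off is that the paper's $\theta$--variable makes $p=2$ trivial (amplitude $\equiv 1$), whereas in your coordinates the amplitude $(1-u^2)^{-1/2}$ is singular at $u=1$ and forces the length--$R^{-1}$ peeling; conversely, the paper extracts the explicit leading coefficient of the asymptotic, which your pure van~der~Corput route does not. One small inaccuracy: for $\tfrac12<p<1$ the amplitude $(1-u^p)^{1/p-1}$ is \emph{not} smooth on $[0,1]$ (its derivative blows up like $(1-u)^{1/p-2}$ at $u=1$); however it is still of bounded variation with $\psi'\in L^1$, which is exactly what the van~der~Corput corollary needs, so the estimate survives unchanged.
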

  
  \begin{theorem}[\itshape{Uniformly asymptotic estimates on $\mathbb{R}^{2}$}]\label{uni}\ \\
    \itshape{For the cases such that $\frac{2}{p}$ are the natural numbers other than $2$, the following holds uniformly with respect to $\varphi\in[0,2\pi)$.}\vspace{-5pt}
    \begin{equation*}\vspace{3pt}
      J_{0}^{[p]}(\eta)=
      \begin{cases}
      \mathcal{O}(|\eta|_{p}^{-\frac{1}{2}}) & (p=2),\\
      \mathcal{O}(|\eta|_{p}^{-\frac{p}{2}}) & (\frac{2}{p}\in\mathbb{N}\setminus{\{1,2\}}),
      \end{cases}
      \qquad\text{as }|\eta|_{p}\to\infty.
    \end{equation*}
  \end{theorem}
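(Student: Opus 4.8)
The plan is to treat $J_0^{[p]}(\eta)$ as a one–dimensional oscillatory integral through the $\omega=0$ branch of (\ref{I-Om}) and to apply van der Corput's lemma at the order $k:=\tfrac2p\in\mathbb N$. Substituting $t=\sin^2\theta$, $\theta\in[0,\tfrac\pi2]$, and using $\tfrac1p=\tfrac k2$ gives $t^{1/p}=\sin^k\theta$, $(1-t)^{1/p}=\cos^k\theta$, and $t^{1/p-1}(1-t)^{1/p-1}\,dt=2\sin^{k-1}\theta\cos^{k-1}\theta\,d\theta$, so that, up to a constant depending only on $p$,
\[
 J_0^{[p]}(\eta)=\int_0^{\pi/2}\cos\!\big(\eta_1\sin^k\theta\big)\cos\!\big(\eta_2\cos^k\theta\big)\,\sin^{k-1}\theta\cos^{k-1}\theta\;d\theta .
\]
By the evenness of $\cos$ and the reflection $\theta\mapsto\tfrac\pi2-\theta$ it suffices to take $\eta_1,\eta_2\ge0$, i.e.\ $\varphi\in[0,\tfrac\pi2]$; with $\rho:=|\eta|_p$, $\eta_1=\rho\cos^k\varphi$, $\eta_2=\rho\sin^k\varphi$ and the product–to–sum identity $\cos A\cos B=\tfrac12\big(\cos(A-B)+\cos(A+B)\big)$, the task becomes to bound, uniformly in $\varphi$,
\[
 I_\pm:=\int_0^{\pi/2}e^{\,i\rho\Phi_\pm(\theta)}\psi(\theta)\,d\theta,\qquad
 \Phi_\pm(\theta)=(\cos\varphi\,\sin\theta)^k\pm(\sin\varphi\,\cos\theta)^k,\quad \psi(\theta)=\sin^{k-1}\theta\cos^{k-1}\theta,
\]
by $\mathcal O(\rho^{-1/k})=\mathcal O(|\eta|_p^{-p/2})$, and by $\mathcal O(\rho^{-1/2})$ in the case $k=1$. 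The amplitude $\psi$ vanishes to order $k-1$ at both endpoints, which makes endpoint contributions harmless; and $\Phi_\pm$, expressed in $u=\sin\theta$ on $[0,\tfrac\pi4]$ (resp.\ in $v=\cos\theta$ on $[\tfrac\pi4,\tfrac\pi2]$), equals $\cos^k\varphi\,u^k\pm\sin^k\varphi\,(1-u^2)^{k/2}$, whose first summand is a degree-$k$ monomial and whose second summand is smooth and bounded there.

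For $p=2$ ($k=1$): $\Phi_+(\theta)=\sin(\theta+\varphi)$ has a single non-degenerate critical point $\theta=\tfrac\pi2-\varphi\in[0,\tfrac\pi2]$, giving $I_+=\mathcal O(\rho^{-1/2})$ by stationary phase, while $\Phi_-(\theta)=\sin(\theta-\varphi)$ has no interior critical point, giving $I_-=\mathcal O(\rho^{-1})$ by one integration by parts (near the endpoint where $\Phi_-'$ is small, $\eta_1$ or $\eta_2$ is $\mathcal O(1)$ and the integral collapses to the classical one–variable Bessel integral). Hence $J_0^{[2]}(\eta)=\mathcal O(|\eta|_2^{-1/2})$, i.e.\ the classical bound $J_0(r)=\mathcal O(r^{-1/2})$ re-expressed through the rotational symmetry of the Euclidean norm.

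For $k\ge3$ (equivalently $p\le\tfrac23$, so $0<p<1$): if $\varphi$ is bounded away from $\{0,\tfrac\pi2\}$, Theorem \ref{cpt.uni} (applicable since $0<p<1$) already yields $J_0^{[p]}(\eta)=\mathcal O(\rho^{-1/2})\subseteq\mathcal O(\rho^{-1/k})$. If $\varphi$ lies in a small fixed neighbourhood of $0$ (that of $\tfrac\pi2$ being symmetric), then $\eta_1\gtrsim_k\rho$ and $\eta_2$ is small; on $[0,\tfrac\pi4]$ the phase of each product-to-sum term, in $u=\sin\theta$, is $\eta_1u^k\pm\eta_2(1-u^2)^{k/2}$, whose $k$-th derivative is $k!\eta_1\pm\eta_2\tfrac{d^k}{du^k}(1-u^2)^{k/2}$, of modulus $\ge k!|\eta_1|-C_k|\eta_2|\gtrsim_k\rho$ once the neighbourhood is small, so van der Corput of order $k$ gives $\mathcal O(\rho^{-2/k})$; on $[\tfrac\pi4,\tfrac\pi2]$ the same holds away from $v=0$, while near $v=0$ the amplitude vanishes like $v^{k-1}$ and the phase is $\approx\pm\eta_1(1-\tfrac k2 v^2)$, so a Fresnel-type estimate gives $\mathcal O((\rho\eta_1)^{-k/2})=\mathcal O(\rho^{-k})$; the lone critical point that $I_+$ may acquire near $u=0$ contributes $\mathcal O(\rho^{-1})$ (and when $\rho^2\eta_2\lesssim1$ the integral again collapses to the one-variable one). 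Collecting these, $J_0^{[p]}(\eta)=\mathcal O(|\eta|_p^{-p/2})$ uniformly in $\varphi$; the directions $\varphi\in\{0,\tfrac\pi2,\pi,\tfrac32\pi\}$ are covered by the one-variable bound $J_0^{[p]}((\eta_1,0))=\mathcal O(|\eta_1|^{-1})$.

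The main obstacle is the uniformity, concentrated in two spots. First, for $\tfrac2p$ odd the phase $(1-u^2)^{k/2}$ is not a polynomial, so van der Corput of order $k$ has no constant $k$-th derivative to exploit; this is why the interval must be cut at $\tfrac\pi4$, after which on each half exactly one of the two trigonometric monomials is a genuine degree-$k$ polynomial while the other is smooth and slowly varying — carrying this factor (and, for the $+$ phase, its single critical point) through the estimates is the delicate bookkeeping. Second, as $\varphi\to0,\tfrac\pi2$ the interior critical point of $I_+$ migrates to an endpoint; one would like a uniform, Airy-type stationary-phase bound there, but it is circumvented here because in that regime one of $\eta_1,\eta_2$ is $\mathcal O(1)$ (reducing the integral to the one-variable case) and because the order-$(k-1)$ vanishing of $\psi$ at the endpoints kills the otherwise singular contribution.
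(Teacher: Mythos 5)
Your argument is essentially the paper's: pass to the oscillatory integrals with phases $(\cos\varphi\,\sin\theta)^{k}\pm(\sin\varphi\,\cos\theta)^{k}$, $k=\tfrac2p$, invoke Theorem \ref{cpt.uni} for $\varphi$ in compact sets off the axes, and near the axes apply Van der Corput at order $k$ (order $2$ when $p=2$), the $k$-th derivative being $\gtrsim|\eta|_{p}$ because of the dominant monomial term — which is precisely the mechanism by which the paper absorbs the stationary point that coalesces with the endpoint (its condition (\ref{theta_a}) together with Proposition \ref{Omega_1}); your substitution $u=\sin\theta$, which makes that monomial explicit, is only a bookkeeping variant of the paper's computation of $F_{p,0}^{(2/p)}$ and continuity in $(\theta,\delta)$. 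A few intermediate claims are off but not load-bearing: Van der Corput of order $k$ yields $\mathcal{O}(|\eta|_{p}^{-1/k})$, not $\mathcal{O}(|\eta|_{p}^{-2/k})$ (which is still the bound the theorem asks for), and the uniform $\mathcal{O}(|\eta|_{p}^{-1})$ assertions for $I_{-}$ when $p=2$ and for the migrating critical point of $I_{+}$ fail as $\varphi\to0$ (the critical point reaches the endpoint, and $\eta_{2}=|\eta|_{p}\sin^{k}\varphi$ need not be $\mathcal{O}(1)$ there); the correct uniform bounds $\mathcal{O}(|\eta|_{p}^{-1/2})$, resp.\ $\mathcal{O}(|\eta|_{p}^{-1/k})$, already follow from the second-, resp.\ $k$-th-, derivative estimates you have in place, which is all that is needed.
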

  
  \begin{remark}
    Among the generalized Bessel functions $J_{\omega}^{[p]}$ defined by (\ref{I-Om}), particularly in the case $\omega=0$, that is, $J_{0}^{[p]}$ has already been studied by D. St. P. Richards\cite{Richards-1985}\cite{Richards-1986} and W. zu Castell\cite{Castell} from different perspectives than number theory. Theorems \ref{cpt.uni} and \ref{uni} of this paper clarify the (uniformly) asymptotic behaviors for the cases $0<p<1$.
  \end{remark}
  Thus, in particular, the result Theorem \ref{uni} shows that uniformly asymptotic evaluations on $\mathbb{R}^{2}$ for $J_{\omega}^{[p]}$ with $\omega=0$ are obtained for the cases $p$ such that $\frac{2}{p}$ are natural numbers other than $2$. \par
  Therefore, in tackling the lattice point problem of the $p$-circle (including the astroid of Figure 1) for the countable infinite cases $p$, among ones less than 1 (that is, the unsolved cases), the trial by using the functions $J_{\omega}^{[p]}$ and the series representations of the generalized lattice point error functions (Theorem \ref{thm}) is expected to be suitable for the current situation. \par
  Thus, it is clear that \textit{$p$ such that uniformly asymptotic evaluations of $J_{0}^{[p]}$ are obtained} coincide with \textit{$p$ corresponding to some $p$-circle that are not yet solved as lattice point problem}. On the other hand, uniformly asymptotic evaluations of $J_{\omega}^{[p]}$ for positive order $\omega$ are required in the trial for the purpose of solving the problem. \par
  More specifically, it is sufficient to obtain uniformly asymptotic evaluations of $J_{\omega}^{[p]}$ such that $\omega$ are natural numbers (in particular $\omega=1,2$). Note, however, that we cannot obtain uniformly asymptotic evaluations of $J_{\omega}^{[p]}$ in the cases $\omega>0$ by the same method (the proof method in Section 2) as the one that leads to the results via the oscillatory integral representation of $J_{0}^{[p]}$ (Proposition \ref{prop-osc}).\par
  \vspace{5pt}
  Therefore, after proving the two main results of this paper in Section 2, we conclude the paper with some future plan that the one is the problem organization and the importance of the oscillatory integral representations of $J_{\omega}^{[p]}$, and the other is a plan for the derivation of the oscillatory integral representations, in Section 3. 
  
  \section{Proof of Theorems \ref{cpt.uni} -\ref{uni} (Asymptotic behavior)}
  \hspace{13pt}In this section, we derive the uniformly asymptotic evaluations, important properties of the generalized Bessel function $J_{0}^{[p]}$.\par 
  Since $J_{0}^{[p]}$ is a bivariate function, we cannot follow the H. Hankel's complex analytic method \cite{Hankel} which derives asymptotic expansions of the Bessel functions. On the other hand, the asymptotic behavior of the objective can be investigated by considering the oscillatory integral as a real analytical approach. 
  Therefore, we firstly give an oscillatory integral representation of $J_{0}^{[p]}$.
  \begin{proposition}(\itshape{Oscillatory integral representations})\label{prop-osc}\ \\
    \itshape{Let $p>0$. For $\eta\ (:=(\eta_{1},\eta_{2}))\in\mathbb{R}^{2}$, if we define
    \begin{align*}
    f_{p}(\eta,\theta)&:=\eta_{1}\cos^{\frac{2}{p}}\theta+\eta_{2}\sin^{\frac{2}{p}}
    \theta,\qquad g_{p}(\eta,\theta):=\eta_{1}\sin^{\frac{2}{p}}\theta-\eta_{2}
    \cos^{\frac{2}{p}}\theta,\\
    I^{[p]}_{h,\pm}(\eta)&:=\int_{0}^{\frac{\pi}{2}}e^{i(\pm{h_{p}}(\eta,\theta))}\psi^{[p]}
    (\theta)d\theta,\quad\text{with }\psi^{[p]}(\theta):=(\cos\theta\sin\theta)^{\frac{2}{p}-1},
    \quad h:=f,g,
    \end{align*}
    then the following holds.
    \begin{equation}\label{p-osc}
      J_{0}^{[p]}(\eta)=\frac{2}{(p\Gamma(\frac{1}{p}))^{2}}(I^{[p]}_{f,+}(\eta)+I^{[p]}_{f,-}(\eta)+I^{[p]}_{g,+}(\eta)+I^{[p]}_{g,-}(\eta)).
    \end{equation}
    In particular, if $\frac{2}{p}\in\mathbb{N}$ is odd, then it can be shown by the following concise integral.}
    \begin{equation}\label{odd-osc}
      J_{0}^{[p]}(\eta)=\frac{2}{(p\Gamma(\frac{1}{p}))^{2}}\int_{0}^{2\pi}
      e^{i(\eta_{1}\sin^{\frac{2}{p}}\theta+\eta_{2}\cos^{\frac{2}{p}}\theta)}\psi^{[p]}
      (\theta)d\theta.
    \end{equation}
  \end{proposition}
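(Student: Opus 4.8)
The plan is to reduce the defining integral (\ref{I-Om}) for $\omega=0$ to the four oscillatory pieces by an explicit change of variables followed by elementary trigonometry. First I substitute $t=\cos^{2}\theta$ (so $1-t=\sin^{2}\theta$ and $dt=-2\cos\theta\sin\theta\,d\theta$), which rewrites the Beta-type integral over $(0,1)$ as
\[
J_{0}^{[p]}(\eta)=\frac{2}{\Gamma^{2}(\tfrac1p)}\Bigl(\frac{2}{p}\Bigr)^{2}\int_{0}^{\pi/2}\cos\bigl(\eta_{1}\cos^{2/p}\theta\bigr)\cos\bigl(\eta_{2}\sin^{2/p}\theta\bigr)\,\psi^{[p]}(\theta)\,d\theta ,
\]
because $t^{1/p-1}(1-t)^{1/p-1}\cdot 2\cos\theta\sin\theta=2(\cos\theta\sin\theta)^{2/p-1}=2\psi^{[p]}(\theta)$; the resulting integrand has only the integrable endpoint behaviour $\sim\theta^{2/p-1}$ at $\theta=0$ (and symmetrically at $\theta=\pi/2$), which causes no trouble since $2/p-1>-1$ for every $p>0$. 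Next I apply the product-to-sum identity $\cos A\cos B=\tfrac12[\cos(A+B)+\cos(A-B)]$ with $A=\eta_{1}\cos^{2/p}\theta$ and $B=\eta_{2}\sin^{2/p}\theta$: the ``$+$'' term is exactly $f_{p}(\eta,\theta)$, while the ``$-$'' term $\eta_{1}\cos^{2/p}\theta-\eta_{2}\sin^{2/p}\theta$ becomes $g_{p}(\eta,\theta)$ under the reflection $\theta\mapsto\frac{\pi}{2}-\theta$, which interchanges $\cos^{2/p}\theta$ and $\sin^{2/p}\theta$, leaves $\psi^{[p]}$ fixed, and preserves the interval $[0,\frac{\pi}{2}]$. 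Finally, writing each remaining cosine as a sum of two complex exponentials and collecting all numerical constants produces the coefficient $\frac{2}{(p\Gamma(1/p))^{2}}$ together with the four integrals $I^{[p]}_{f,\pm},I^{[p]}_{g,\pm}$, which is (\ref{p-osc}).

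For the concise formula (\ref{odd-osc}) when $\tfrac2p\in\mathbb{N}$ is odd, the key observation is that then $(\cos\theta)^{2/p}=\mathrm{sgn}(\cos\theta)|\cos\theta|^{2/p}$ and $(\sin\theta)^{2/p}=\mathrm{sgn}(\sin\theta)|\sin\theta|^{2/p}$ are genuine odd powers, hence honest $2\pi$-periodic functions, while $\psi^{[p]}(\theta)=(\cos\theta\sin\theta)^{2/p-1}$ has an \emph{even} exponent and is therefore nonnegative and $2\pi$-periodic. I would split $\int_{0}^{2\pi}e^{i(\eta_{1}\sin^{2/p}\theta+\eta_{2}\cos^{2/p}\theta)}\psi^{[p]}(\theta)\,d\theta$ over the four quadrant subintervals and substitute $\theta=\frac{\pi}{2}-u,\ \pi-u,\ \frac{3\pi}{2}-u,\ 2\pi-u$ respectively; on each quadrant $\psi^{[p]}$ is unchanged, while the sign patterns of $\sin^{2/p}$ and $\cos^{2/p}$ turn the argument of the exponential into $f_{p}(\eta,u)$, $g_{p}(\eta,u)$, $-f_{p}(\eta,u)$, $-g_{p}(\eta,u)$ in turn. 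Summing the four contributions recovers $I^{[p]}_{f,+}+I^{[p]}_{g,+}+I^{[p]}_{f,-}+I^{[p]}_{g,-}$, and (\ref{odd-osc}) then follows from (\ref{p-osc}).

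None of the steps is genuinely hard; the only point demanding care is the bookkeeping of the branch/sign conventions for $\cos^{2/p}\theta$ and $\sin^{2/p}\theta$ outside $[0,\frac{\pi}{2}]$, so that the quadrant substitutions in the odd-integer case are legitimate, together with the elementary check that $\psi^{[p]}$ is integrable at the endpoints --- which holds for all $p>0$.
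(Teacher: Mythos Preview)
Your proof is correct and follows essentially the same route as the paper's: the substitution $t=\cos^{2}\theta$, the product-to-sum identity, and the reflection $\theta\mapsto\tfrac{\pi}{2}-\theta$ are exactly the ingredients the author uses (he applies the product-to-sum first and then substitutes, but this is immaterial). For (\ref{odd-osc}) the paper argues in the reverse direction---starting from the four pieces $I^{[p]}_{h,\pm}$ and shifting each onto a different quadrant of $[0,2\pi]$---but your quadrant splitting of the full integral is the same computation read backwards, with the same substitutions $\theta=\tfrac{\pi}{2}-u,\ \pi-u,\ \tfrac{3\pi}{2}-u,\ 2\pi-u$.
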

  \vspace{3pt}
  
  \begin{proof}
  From the definition (\ref{I-Om}), the oscillatory integral representation (\ref{p-osc}) can be obtained by deformation as follows.
  \begin{align*}
    J_{0}^{[p]}(\eta)&=\frac{1}{\Gamma^{2}(\frac{1}{p})}\left(\frac{2}{p}\right)^{2}\int_{0}^{1}\cos(\eta_{1}t^{\frac{1}{p}})\cos(\eta_{2}(1-t)^{\frac{1}{p}})t^{\frac{1}{p}-1}(1-t)^{\frac{1}{p}-1}dt\\
    &=\frac{2}{(p\Gamma(\frac{1}{p}))^{2}}\int_{0}^{1}\left(\cos(\eta_{1}t^{\frac{1}{p}}+\eta_{2}(1-t)^{\frac{1}{p}})+\cos(\eta_{1}t^{\frac{1}{p}}-\eta_{2}(1-t)^{\frac{1}{p}})\right)(1-t)^{\frac{1}{p}-1}t^{\frac{1}{p}-1}dt\\
    &=\frac{2}{(p\Gamma(\frac{1}{p}))^{2}}\int_{0}^{\frac{\pi}{2}}\left(\cos(\eta_{1}\cos^{\frac{2}{p}}\theta+\eta_{2}\sin^{\frac{2}{p}}\theta)+\cos(\eta_{1}\sin^{\frac{2}{p}}\theta-\eta_{2}\cos^{\frac{2}{p}}\theta)\right)(\cos^{2}\theta)^{\frac{1}{p}-1}(\sin^{2}\theta)^{\frac{1}{p}-1}2\sin\theta\cos\theta d\theta\\
    &=\frac{2}{(p\Gamma(\frac{1}{p}))^{2}}\int_{0}^{\frac{\pi}{2}}\bigl(e^{i(\eta_{1}
    \cos^{\frac{2}{p}}\theta+\eta_{2}\sin^{\frac{2}{p}}\theta)}+e^{-i(\eta_{1}\cos^{\frac{2}{p}}\theta+\eta_{2}\sin^{\frac{2}{p}}\theta)}+e^{i(\eta_{1}
    \sin^{\frac{2}{p}}\theta-\eta_{2}\cos^{\frac{2}{p}}\theta)}+\\
    &\hspace{230pt}+e^{-i(\eta_{1}\sin^{\frac{2}{p}}\theta-\eta_{2}\cos^{\frac{2}{p}}
    \theta)}\bigr)(\cos\theta\sin\theta)^{\frac{2}{p}-1} d\theta\\
    &=\frac{2}{(p\Gamma(\frac{1}{p}))^{2}}(I^{[p]}_{f,+}(\eta)+I^{[p]}_{f,-}
      (\eta)+I^{[p]}_{g,+}(\eta)+I^{[p]}_{g,-}(\eta)).
  \end{align*}
    \hspace{13pt}Furthermore, from the display (\ref{p-osc}), by applying integral transformations as $t:=\frac{\pi}{2}-\theta$ for $I^{[p]}_{f,+}$, $t:=\frac{3}{2}\pi-\theta$ for $I^{[p]}_{f,-}$, $t:=\pi-\theta$ for $I^{[p]}_{g,+}$, and $t:=2\pi-\theta$ for $I^{[p]}_{g,-}$, it can be expressed as
    \begin{align*}
      J_{0}^{[p]}(\eta)&=\frac{2}{(p\Gamma(\frac{1}{p}))^{2}}\Bigl(\int_{0}^{\frac{\pi}
      {2}}e^{i(\eta_{1}\sin^{\frac{2}{p}}t+\eta_{2}\cos^{\frac{2}{p}}t)}\psi^{[p]}(t)dt+
      \int_{\pi}^{\frac{3}{2}\pi}e^{-i(\eta_{1}(-\sin t)^{\frac{2}{p}}+\eta_{2}
      (-\cos t)^{\frac{2}{p}})}\psi^{[p]}(t)dt\\
      &\hspace{10pt}+\int_{\frac{\pi}{2}}^{\pi}e^{i(\eta_{1}(\sin t)^{\frac{2}{p}}-
      \eta_{2}(-\cos t)^{\frac{2}{p}})}(-\cos t\sin t)^{\frac{2}{p}-1}dt
      +\int_{\frac{3}{2}\pi}^{2\pi}e^{-i(\eta_{1}(-\sin t)^{\frac{2}{p}}-\eta_{2}
      (\cos t)^{\frac{2}{p}})}(-\cos t\sin t)^{\frac{2}{p}-1}dt\Bigr).
    \end{align*}
    In particular, if $\frac{2}{p}$ is odd, then this is the representation (\ref{odd-osc}).
  \end{proof}
  \begin{remark}
    Since $J_{\omega}^{[2]}(\eta)=J_{\omega}(|\eta|)$ and in particular $J_{0}^{[2]}(r,0)=J_{0}(r)$ hold, (\ref{odd-osc}) is a generalization of the following oscillatory integral representation of the Bessel function $J_{0}$ (\cite{Stein-1993}, p338; (13)).
    \begin{equation*}
      J_{0}(r)=\frac{1}{2\pi}\int_{0}^{2\pi}e^{ir\sin\theta}d\theta\qquad\text{for }r>0.
    \end{equation*}
  \end{remark}
  
  \subsection{\normalsize{Proof of Theorem \ref{cpt.uni}}}
  \hspace{13pt}Firstly, we prepare two necessary lemmas to prove Theorem \ref{cpt.uni}.
  \begin{lemma}\label{-1order}
    \itshape{For a real-valued $C^{2}$ function $\phi$ and $C^{1}$ function $\psi$ on the interval} \rm{[}$a,b$\rm{]},\itshape{ if $\phi'(x)\neq0$ on the same interval, then the following holds.}
    \begin{equation*}
      \int_{a}^{b}e^{i\lambda\phi(x)}\psi(x)dx=\mathcal{O}(\lambda^{-1})
      \quad \text{as}\ \lambda\to\infty.
    \end{equation*}
  \end{lemma}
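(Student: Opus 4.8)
The plan is to prove this by a single integration by parts, exploiting the identity $e^{i\lambda\phi(x)}=\dfrac{1}{i\lambda\phi'(x)}\dfrac{d}{dx}e^{i\lambda\phi(x)}$, which is legitimate precisely because $\phi'$ never vanishes on $[a,b]$.

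First I would record that, since $\phi'$ is continuous and nonvanishing on the compact interval $[a,b]$, there is a constant $c>0$ with $|\phi'(x)|\geq c$ for all $x\in[a,b]$; consequently $1/\phi'$ is well defined and of class $C^{1}$ there, and hence so is the quotient $\psi/\phi'$ (using $\psi\in C^{1}$ and $\phi\in C^{2}$). Substituting the identity above and integrating by parts then gives
\[
  \int_{a}^{b}e^{i\lambda\phi(x)}\psi(x)\,dx
  =\frac{1}{i\lambda}\left[\frac{\psi(x)}{\phi'(x)}e^{i\lambda\phi(x)}\right]_{x=a}^{x=b}
  -\frac{1}{i\lambda}\int_{a}^{b}e^{i\lambda\phi(x)}\,\frac{d}{dx}\!\left(\frac{\psi(x)}{\phi'(x)}\right)dx.
\]

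Next I would estimate the two terms on the right-hand side. The boundary term is bounded in absolute value by $\frac{1}{\lambda}\bigl(\frac{|\psi(a)|}{|\phi'(a)|}+\frac{|\psi(b)|}{|\phi'(b)|}\bigr)$, hence it is $\mathcal{O}(\lambda^{-1})$. For the remaining integral, $\frac{d}{dx}(\psi/\phi')=\frac{\psi'\phi'-\psi\phi''}{(\phi')^{2}}$ is continuous on $[a,b]$ and therefore bounded there by some $M<\infty$; since $|e^{i\lambda\phi(x)}|=1$, the integral is at most $M(b-a)/\lambda$, which is again $\mathcal{O}(\lambda^{-1})$. Adding the two estimates yields the assertion.

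There is essentially no deep obstacle here: this is the classical non-stationary phase (van der Corput) estimate, and the only point requiring care is the uniform lower bound $|\phi'|\geq c>0$, which makes the division legitimate and keeps all constants finite; this is exactly where compactness of $[a,b]$ and the hypothesis $\phi'\neq 0$ enter. It is worth keeping track of the fact that the implied constant in $\mathcal{O}(\lambda^{-1})$ depends only on $a,b$ and on upper bounds for $|\psi|,|\psi'|,|\phi''|$ together with a lower bound for $|\phi'|$, since later this lemma will be applied with $\phi$ depending on the parameter $\varphi$, and uniformity over compact sets of $\varphi$-values in Theorem \ref{cpt.uni} will follow from the uniformity of these bounds.
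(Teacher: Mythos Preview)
Your proof is correct and follows the same approach as the paper: both use the identity $e^{i\lambda\phi}=\frac{1}{i\lambda\phi'}(e^{i\lambda\phi})'$ and a single integration by parts to produce the boundary term and the integral of $(\psi/\phi')'$, each of order $\lambda^{-1}$. Your version is in fact more explicit than the paper's about the compactness argument giving $|\phi'|\geq c>0$ and about the dependence of the implied constant, which is exactly the observation the paper records separately in the remark following the lemma.
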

  \begin{proof}
    Since it can be expressed as $e^{i\lambda\phi(x)}=\frac{1}{i\lambda\phi'(x)}(e^{i\lambda\phi(x)})'$ from the assumption, it is clear by integration by parts.
    \begin{align*}
      \int_{a}^{b}e^{i\lambda\phi(x)}\psi(x)dx&=\frac{1}{i\lambda}\int_{a}^{b}
      (e^{i\lambda\phi(x)})'\frac{\psi(x)}{\phi'(x)}dx\\
      &=\frac{1}{i\lambda}\Bigl(\Bigl[e^{i\lambda\phi(x)}\frac{\psi(x)}{\phi'(x)}
      \Bigr]_{a}^{b}-\int_{a}^{b}e^{i\lambda\phi(x)}\Bigl(\frac{\psi(x)}{\phi'(x)}\Bigr)'
      dx\Bigr)=\mathcal{O}(\lambda^{-1})\quad\text{as }\lambda\to\infty.
    \end{align*}
  \end{proof}
  \begin{remark}
    Considering the phase function of Lemma \ref{-1order} as a bivariate $C^{2}$ function $\phi(x,y)$, it is clear from this Lemma's proof that the evaluation formula 
    \begin{equation*}
      \int_{a}^{b}e^{i\lambda\phi(x,y)}\psi(x)dx=\mathcal{O}(\lambda^{-1})\quad
      \text{as }\lambda\to\infty
    \end{equation*}
    uniformly holds with respect to $y$ on sets which satisfy $\frac{\partial}{\partial x}\phi(x,y)\neq0$ and are compact for $y$.
  \end{remark}
  \begin{lemma}[\itshape{\cite{Bleistein-1986}, p220; (6.1.5) or \cite{Bleistein-1984}, p79-p80; 
  (2.7.12), (2.7.17), (2.7.18)}]\label{Bleistein}\ \\
    \itshape{For a real-valued $C^{k}$($k=2$ or $3$) function $\phi$ and $C^{1}$ function $\psi$ on the interval $[a,b]$, if there exists only one $x_{0}\in[a,b]$ satisfying 
    \begin{equation*}
      \phi'(x_{0})=0,\qquad\phi''(x_{0})\neq0,
    \end{equation*} 
    then, as $\lambda\to\infty$, each of the following holds}. \\ 
    (1) If $x_{0}=a$ or $x_{0}=b$, and $k=2$, then
    \begin{equation*}
      \int_{a}^{b}e^{i\lambda\phi(x)}\psi(x)dx=\frac{1}{2}e^{i\lambda{\phi}(x_{0})+
      \frac{\pi}{4}i\ \mathrm{sgn}(\phi''(x_{0}))}\psi(x_{0})\sqrt{\frac{2\pi}
      {\lambda|\phi''(x_{0})|}}+\mathcal{O}(\lambda^{-1}).
    \end{equation*}
    \hspace{15pt}If $x_{0}=a$ or $x_{0}=b$, and $k=3$, as $C_{x_{0}}:=1$ ($x_{0}=a$), $-1$ ($x_{0}=b$), then
    \begin{align*}
      \int_{a}^{b}e^{i\lambda\phi(x)}\psi(x)dx=\frac{1}{2}e^{i\lambda{\phi}(x_{0})+
      \frac{\pi}{4}i\ \mathrm{sgn}(\phi''(x_{0}))}&\Bigl(\psi(x_{0})\sqrt{\frac{2\pi}
      {\lambda|\phi''(x_{0})|}}\\
      &+C_{x_{0}}\frac{2}{\lambda|\phi''(x_{0})|}\Bigl[
      \psi'(x_{0})-\frac{\phi'''(x_{0})}{3\phi''(x_{0})}\Bigr]e^{\frac{\pi}{4}i\ 
      \mathrm{sgn}(\phi''(x_{0}))}\Bigr)+\mathcal{O}(\lambda^{-\frac{3}{2}}).
    \end{align*}
    (2) If $x_{0}\in(a,b)$, then
    \begin{equation*}
      \int_{a}^{b}e^{i\lambda\phi(x)}\psi(x)dx=e^{i\lambda{\phi}(x_{0})+\frac{\pi}{4}i\ 
      \mathrm{sgn}(\phi''(x_{0}))}\psi(x_{0})\sqrt{\frac{2\pi}{\lambda|\phi''(x_{0})|}
      }+\mathcal{O}(\lambda^{-\frac{3}{2}}).
    \end{equation*}
  \end{lemma}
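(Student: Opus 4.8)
The plan is to prove this by the classical method of stationary phase, isolating the contribution of the unique critical point $x_0$ and reducing the integral there to a Fresnel integral. First I would localize: introduce a smooth neutralizer $\chi$ equal to $1$ on a small neighborhood $U$ of $x_0$ and vanishing outside a slightly larger one on which $\phi''$ keeps a constant sign and $\phi'$ vanishes only at $x_0$, and split $\int_a^b e^{i\lambda\phi}\psi\,dx$ into the localized piece $\int e^{i\lambda\phi}\chi\psi\,dx$ and the remainder. On the support of $1-\chi$ the derivative $\phi'$ is bounded away from $0$, so Lemma \ref{-1order} controls this remainder, the sub-leading and endpoint pieces being of the same or smaller order than the quoted errors. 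Thus the asymptotics are dictated by the localized integral concentrated at $x_0$.

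On $U$ I would pass to the Morse-type variable $u$ defined on each side of $x_0$ by $\phi(x)-\phi(x_0)=\tfrac12\,\mathrm{sgn}(\phi''(x_0))\,u^{2}$. Writing $\phi(x)-\phi(x_0)=(x-x_0)^2 h(x)$ with $h(x_0)=\tfrac12\phi''(x_0)\neq0$ shows that $u=u(x)$ is locally invertible near $x_0$ with $u'(x_0)=\sqrt{|\phi''(x_0)|}$, and the localized integral becomes $e^{i\lambda\phi(x_0)}\int e^{i\tfrac{\lambda}{2}\mathrm{sgn}(\phi''(x_0))u^{2}}a(u)\,du$ with amplitude $a(u)=\chi\psi\big(x(u)\big)\,x'(u)$ satisfying $a(0)=\psi(x_0)/\sqrt{|\phi''(x_0)|}$. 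Extending the negligible tails to the model domain, the leading term is read off from the Fresnel integral $\int_{\mathbb{R}}e^{i\tfrac{\lambda}{2}\mathrm{sgn}(\phi''(x_0))u^{2}}du=\sqrt{\tfrac{2\pi}{\lambda}}\,e^{\frac{\pi}{4}i\,\mathrm{sgn}(\phi''(x_0))}$; inserting $a(0)$ reproduces exactly the stated leading coefficient. For an interior critical point the $u$-domain is two-sided, giving the coefficient of (2); for an endpoint the model domain is one-sided ($u\ge0$ or $u\le0$ according to $C_{x_0}=\pm1$), so only half of the Fresnel integral is seen and the coefficient acquires the factor $\tfrac12$ of (1).

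The remainder orders then follow from a parity analysis of $a$. Expanding $a(u)=a(0)+a'(0)u+O(u^{2})$, the would-be $\lambda^{-1}$ contribution is proportional to the first moment $\int e^{i\tfrac{\lambda}{2}\mathrm{sgn}(\phi''(x_0))u^{2}}u\,du$. For an interior point this moment is odd over the symmetric domain and vanishes, so the next surviving term is already of order $\lambda^{-3/2}$, which gives the remainder of (2) directly from the leading term. For an endpoint the symmetry is broken and the $\lambda^{-1}$ term persists: with $\phi\in C^{2}$ only, it is left in the error, yielding $\mathcal{O}(\lambda^{-1})$ as in the first part of (1); with $\phi\in C^{3}$ one more Taylor order is available, and computing $a'(0)$—which through the change of variable brings in $\psi'(x_0)$ together with the curvature correction $\phi'''(x_0)/\phi''(x_0)$ coming from $x'(u)$—and multiplying by the one-sided first moment $\propto C_{x_0}\lambda^{-1}$ reproduces the explicit correction term of the $k=3$ part of (1), with remainder $\mathcal{O}(\lambda^{-3/2})$.

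The main obstacle I anticipate is the bookkeeping forced by the minimal regularity: with $\phi$ only $C^{2}$ or $C^{3}$ and $\psi$ only $C^{1}$, the change of variable $u(x)$ and the amplitude $a(u)$ are only finitely differentiable, so the Taylor expansions, the control of the extended tails, and the non-stationary remainder must each be carried out to exactly the order the hypotheses permit rather than by appealing to smoothness. Matching the resulting constants and, above all, pinning down the phase factor $e^{\frac{\pi}{4}i\,\mathrm{sgn}(\phi''(x_0))}$ and the orientation sign $C_{x_0}$ in the endpoint correction is the delicate part of the argument; this is precisely the detailed computation carried out in \cite{Bleistein-1986} and \cite{Bleistein-1984}, to which the full verification can be referred.
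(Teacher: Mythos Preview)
The paper does not prove this lemma at all: it is stated as a citation to \cite{Bleistein-1986} and \cite{Bleistein-1984} and used as a black box. Your outline is the standard stationary phase argument that those references carry out (localization, Morse change of variable to a Fresnel model, parity analysis distinguishing interior from endpoint critical points), and you correctly flag the low-regularity bookkeeping as the only delicate point before deferring to the same sources; so your proposal is consistent with, and in fact more detailed than, what the paper itself provides.
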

  \vspace{3pt}
  \begin{proof}[\text{Proof of Theorem \ref{cpt.uni}}]
    By applying the variable transformation
    \begin{equation*}
      \eta_{1}=\mathrm{sgn}(\cos\varphi)|\eta|_{p}|\cos\varphi|^{\frac{2}{p}},\quad
      \eta_{2}=\mathrm{sgn}(\sin\varphi)|\eta|_{p}|\sin\varphi|^{\frac{2}{p}},\quad
      (0\leq\varphi<2\pi)
    \end{equation*}
    to Proposition \ref{prop-osc} and replacing the phase functions with 
    \begin{align*}
      f_{p,\varphi}(\theta)&:=
      \begin{cases}
        (\cos\varphi\cos\theta)^{\frac{2}{p}}+(\sin\varphi\sin\theta)^{\frac{2}{p}} & 
        \text{if }0\leq\varphi<\frac{\pi}{2},\\
        -(-\cos\varphi\cos\theta)^{\frac{2}{p}}+(\sin\varphi\sin\theta)^{\frac{2}{p}} & 
        \text{if }\frac{\pi}{2}\leq\varphi<\pi,\\
        -(-\cos\varphi\cos\theta)^{\frac{2}{p}}-(-\sin\varphi\sin\theta)^{\frac{2}{p}} & 
        \text{if }\pi\leq\varphi<\frac{3}{2}\pi,\\
        (\cos\varphi\cos\theta)^{\frac{2}{p}}-(-\sin\varphi\sin\theta)^{\frac{2}{p}} & 
        \text{if }\frac{3}{2}\pi\leq\varphi<2\pi,
      \end{cases}\\
      g_{p,\varphi}(\theta)&:=
      \begin{cases}
        (\cos\varphi\sin\theta)^{\frac{2}{p}}-(\sin\varphi\cos\theta)^{\frac{2}{p}} & 
        \text{if }0\leq\varphi<\frac{\pi}{2},\\
        -(-\cos\varphi\sin\theta)^{\frac{2}{p}}-(\sin\varphi\cos\theta)^{\frac{2}{p}} & 
        \text{if }\frac{\pi}{2}\leq\varphi<\pi,\\
        -(-\cos\varphi\sin\theta)^{\frac{2}{p}}+(-\sin\varphi\cos\theta)^{\frac{2}{p}} & 
        \text{if }\pi\leq\varphi<\frac{3}{2}\pi,\\
        (\cos\varphi\sin\theta)^{\frac{2}{p}}+(-\sin\varphi\cos\theta)^{\frac{2}{p}} & 
        \text{if }\frac{3}{2}\pi\leq\varphi<2\pi,
      \end{cases}
    \end{align*}
    then, we can again express it as follows.
    \begin{align*}
    J_{0}^{[p]}(\eta)=\frac{2}{(p\Gamma(\frac{1}{p}))^{2}}(I^{[p]}_{f,+}(\eta)+&I^{[p]}_{f,-}(\eta)+I^{[p]}_{g,+}(\eta)+I^{[p]}_{g,-}(\eta)),\\
      \text{with }&I^{[p]}_{h,\pm}(\eta)=\int_{0}^{\frac{\pi}{2}}e^{i|\eta|_{p}(\pm{h_{p,\varphi}}(\theta))}
    \psi^{[p]}(\theta)d\theta\quad\text{for }h=f,g.
    \end{align*}
    \hspace{13pt}In the following, we only need to consider a compact set on $0<\varphi<\frac{\pi}{2}$ by symmetry (similar arguments hold for the other quadrants by paying attention to the sign). That is, we fix $\eta$ on such a compact set taken arbitrarily and investigate stationary points of the phase functions
    \begin{equation*}
      f_{p,\varphi}(\theta)=(\cos\varphi\cos\theta)^{\frac{2}{p}}+(\sin\varphi\sin
      \theta)^{\frac{2}{p}},\quad 
      g_{p,\varphi}(\theta)=(\cos\varphi\sin\theta)^{\frac{2}{p}}-(\sin\varphi\cos
      \theta)^{\frac{2}{p}} \quad\text{for }0\leq\theta\leq\frac{\pi}{2}.
    \end{equation*}
    Note that a stationary point is a point where the value of the first-order derivative of the function is $0$.\\\vspace{3pt}\par
    Firstly, we focus on the case $p=2$, where the argument is concise. Then, by
    \begin{equation*}
      f_{2,\varphi}(\theta)=\cos(\theta-\varphi),\quad
      g_{2,\varphi}(\theta)=\sin(\theta-\varphi),
    \end{equation*}
    \begin{equation*}
      f_{2,\varphi}'(\theta)=-\sin(\theta-\varphi),\quad
      g_{2,\varphi}'(\theta)=\cos(\theta-\varphi),
    \end{equation*}
    so the stationary point of $f_{2,\varphi}$ is $\theta=\varphi(\in(0,\frac{\pi}{2}))$, and there is no stationary point of $g_{2,\varphi}$. \par
    From these and $f_{2,\varphi}''(\varphi)=-1$, we obtain the following by appropriately partitioning the integral interval and applying Lemma \ref{-1order} and Lemma \ref{Bleistein}. (The main term of this result is indeed consistent with that of the well-known asymptotic behavior in the conventional Bessel functions. See, for example, (6.1.17), (6.1.18) of \cite{Bleistein-1986} or p199(1) of \cite{Watson}.)
    \begin{align*}
      J_{0}^{[2]}(\eta)&=\frac{1}{2\pi}(e^{(|\eta|-\frac{\pi}{4})i}+e^{(-|\eta|+
      \frac{\pi}{4})i})\sqrt{\frac{2\pi}{|\eta|}}+\mathcal{O}(|\eta|^{-1})\\
      &=\sqrt{\frac{2}{\pi|\eta|}}\cos(|\eta|-\frac{\pi}{4})+\mathcal{O}(|\eta|^{-
      1})\qquad\text{as }|\eta|\to\infty.
    \end{align*}
    \hspace{13pt}Furthermore, since $\mathrm{arg}(\eta)(:=\varphi)$ is arbitrary on the compact set fixed, 
    \begin{equation*}
      J_{0}^{[2]}(\eta)=\mathcal{O}(|\eta|^{-\frac{1}{2}})\qquad\text{as }|\eta|\to\infty
    \end{equation*}
    holds uniformly on this set, and from the arbitrariness of the set, we obtained the desired evaluation formula uniformly on compacts in the case $p=2$. \\
    \vspace{5pt}\par
    Therefore, we will follow above flow to consider the cases $0<p<1$ below.\par
    Since it can be expressed as 
    \begin{equation*}
      f_{p,\varphi}'(\theta)=-\frac{2}{p}\sin\theta\cos\theta(\cos^{\frac{2}{p}}\varphi
      \cos^{\frac{2}{p}-2}\theta-\sin^{\frac{2}{p}}\varphi\sin^{\frac{2}{p}-2}\theta)
      =:-\frac{2}{p}\sin\theta\cos\theta\ u_{\varphi}^{[p]}(\theta),
    \end{equation*}
    it is clear that $\theta=0,\frac{\pi}{2}$ are stationary points of $f_{p,\varphi}$, and we can also find the remaining stationary point as follows. \vspace{-5pt}
    \begin{align*}
      u_{\varphi}^{[p]}(\theta)=0\ &\iff\ \cos^{\frac{2}{p}}\varphi\cos^{\frac{2}{p}-2}\theta
      =\sin^{\frac{2}{p}}\varphi\sin^{\frac{2}{p}-2}\theta\\
      &\iff\ \cos^{\frac{1}{1-p}}\varphi\cos\theta=\sin^{\frac{1}{1-p}}\varphi\sin\theta
      \\
      &\iff\ a(\varphi)(\cos\varphi_{0}\cos\theta-\sin\varphi_{0}\sin\theta)=0\\
      &\iff\ \cos(\theta+\varphi_{0})=0 \qquad \iff\ \theta=\frac{\pi}{2}-\varphi_{0}.
    \end{align*}
    Note, however, that the symbol is defined as 
    \begin{equation*}
      a(\varphi):=(\cos^{\frac{2}{1-p}}\varphi+\sin^{\frac{2}{1-p}}\varphi)^{\frac{1}{2}},
    \end{equation*}
    and by the composition of trigonometric functions, $\varphi_{0}\in(0,\frac{\pi}{2})$ is taken such that it is satisfied 
    \begin{equation}\label{varphi_0}
      \cos\varphi_{0}=\frac{\cos^{\frac{1}{1-p}}\varphi}{a(\varphi)},\quad
      \sin\varphi_{0}=\frac{\sin^{\frac{1}{1-p}}\varphi}{a(\varphi)}.
    \end{equation}
    \par
    Next, investigate $f''_{p,\varphi}$. Since they can be expressed as
    \begin{align*}
      f_{p,\varphi}''(\theta)&=-\frac{2}{p}(\cos2\theta\ u_{\varphi}^{[p]}(\theta)+\sin\theta
      \cos\theta\ u_{\varphi}^{[p]}\hspace{1pt}'(\theta)),\\
      u_{\varphi}^{[p]}\hspace{1pt}'(\theta)&=-\Bigl(\frac{2}{p}-2\Bigr)(\cos^{\frac{2}{p}}\varphi
      \sin\theta\cos^{\frac{2}{p}-3}\theta+\sin^{\frac{2}{p}}\varphi\cos\theta
      \sin^{\frac{2}{p}-3}\theta),
    \end{align*}
    in addition to 
    $f_{p,\varphi}''(0)=-\frac{2}{p}\cos^{\frac{2}{p}}\varphi<0,
    \ f_{p,\varphi}''(\frac{\pi}{2})=-\frac{2}{p}\sin^{\frac{2}{p}}\varphi<0$,
    the following can be confirmed.
    \begin{align*}
      f_{p,\varphi}''(\frac{\pi}{2}-\varphi_{0})&=\frac{2}{p}\Bigl(\frac{2}{p}-2\Bigr)
      \sin(\frac{\pi}{2}-\varphi_{0})\cos(\frac{\pi}{2}-\varphi_{0})\Bigl(\cos^{\frac{2}{p}}\varphi\sin(\frac{\pi}{2}-\varphi_{0})\cos^{\frac{2}{p}-3}
      (\frac{\pi}{2}-\varphi_{0})\\
      &\hspace{30pt}+\sin^{\frac{2}{p}}\varphi\cos(\frac{\pi}{2}-\varphi_{0})
      \sin^{\frac{2}{p}-3}(\frac{\pi}{2}-\varphi_{0})\Bigr)\\
      &=\frac{2}{p}\Bigl(\frac{2}{p}-2\Bigr)\cos\varphi_{0}\sin\varphi_{0}(\cos^{\frac{2}
      {p}}\varphi\cos\varphi_{0}\sin^{\frac{2}{p}-3}\varphi_{0}+\sin^{\frac{2}{p}}\varphi
      \sin\varphi_{0}\cos^{\frac{2}{p}-3}\varphi_{0})>0.
    \end{align*}
    \par
    In addition to the above and $\psi^{[p]}(0)=\psi^{[p]}(\frac{\pi}{2})=0$, we obtain the following by applying Lemma \ref{-1order} and Lemma \ref{Bleistein} after dividing the integral interval appropriately as in the case $p=2$.
    \begin{equation*}
      I^{[p]}_{f,+}(\eta)+I^{[p]}_{f,-}(\eta)=
        \frac{2\sqrt{2\pi}\cos(|\eta|_{p}f_{p,\varphi}(\frac{\pi}{2}-\varphi_{0})+
        \frac{\pi}{4})
        \psi^{[p]}(\frac{\pi}{2}-\varphi_{0})}{\sqrt{f_{p,\varphi}''(\frac{\pi}{2}-
        \varphi_{0})}}|\eta|_{p}^{-\frac{1}{2}}+\mathcal{O}(|\eta|_{p}^{-1}) \quad
        \text{as }|\eta|_{p}\to\infty.
    \end{equation*}
    \hspace{13pt}Furthermore, $\tilde{f}_{p}(\varphi):=f_{p,\varphi}''(\frac{\pi}{2}-\varphi_{0})$ is continuous from (\ref{varphi_0}) and positive on the assumed compact set, so $\tilde{f}_{p}(\varphi)$ has the minimum value greater than $0$ on the same set. Therefore, we obtain a uniform evaluation formula
    \begin{equation}\label{f_cpt.uni}
      I^{[p]}_{f,+}(\eta)+I^{[p]}_{f,-}(\eta)=
        \mathcal{O}(|\eta|_{p}^{-\frac{1}{2}}) \quad\text{as }|\eta|_{p}\to\infty, \text{ if } 0<p<1
    \end{equation}
    on this compact set. \par
    From now on, it is sufficient to consider the evaluation of $I^{[p]}_{g,\pm}$ for the cases $0<p<1$ in the same way. By 
    \begin{align*}
      g_{p,\varphi}'(\theta)=\frac{2}{p}\sin\theta\cos\theta(&\cos^{\frac{2}{p}}\varphi
      \sin^{\frac{2}{p}-2}\theta+\sin^{\frac{2}{p}}\varphi\cos^{\frac{2}{p}-2}\theta)
      =:\frac{2}{p}\sin\theta\cos\theta\ v_{\varphi}^{[p]}(\theta),\\
      v_{\varphi}^{[p]}(\theta)=0\ &\iff\ \cos^{\frac{2}{p}}\varphi\sin^{\frac{2}{p}-2}\theta
      =0,\ \sin^{\frac{2}{p}}\varphi\cos^{\frac{2}{p}-2}\theta=0,\\
      &\iff\ \sin\theta=\cos\theta=0,
    \end{align*}
    we find that there is no such $\theta$ and the only stationary points of $g_{p,\varphi}$ are $\theta=0,\frac{\pi}{2}$.\par
    Thus, from $\psi^{[p]}(0)=\psi^{[p]}(\frac{\pi}{2})=0$, we can also apply Lemma \ref{-1order} and Lemma \ref{Bleistein}, and again combine them with the evaluation formulas (\ref{f_cpt.uni}) to obtain the overall evaluation formulas uniformly on compacts, that is, to complete the proof.
    \begin{equation*}
      J_{0}^{[p]}(\eta)=\mathcal{O}(|\eta|_{p}^{-\frac{1}{2}})
      \qquad\text{as }|\eta|_{p}\to\infty.
    \end{equation*}
  \end{proof}
  \begin{remark}
    For the cases $2<p$ or $1<p<2$, the asymptotic expansion Lemma \ref{Bleistein} cannot be applied to obtain evaluation formulas uniformly on compacts because the symbol function $\psi^{[p]}$ and its first-order derivative diverge at the end points which are the stationary points, and the $C^{1}$ class condition is not satisfied. On the other hand, for the case $p=1$, it is found that the desired uniform evaluation formula cannot be obtained.
  \end{remark}
  \subsection{\normalsize{Proof of Theorem \ref{uni}}}
  \hspace{13pt}Furthermore, for the purpose of further higher-order differentiation, we will consider the right-hand neighborhood of axis $\varphi=\frac{\pi}{2}$, where $\frac{2}{p}$ are natural numbers (other than 2) so that the phase functions have appropriate smoothness (by symmetry, the other neighborhood and the other three axes can be discussed similarly). For sufficiently small $0\leq\delta<<1$, defined as the functions 
  \begin{equation*}
    F_{p,\delta}(\theta):=\delta\cos^{\frac{2}{p}}\theta+\sin^{\frac{2}{p}}\theta,\quad
    G_{p,\delta}(\theta):=\delta\sin^{\frac{2}{p}}\theta-\cos^{\frac{2}{p}}\theta\qquad
    \text{for }0\leq\theta\leq\frac{\pi}{2},
  \end{equation*}
  each symbol of Proposition \ref{prop-osc} for a neighborhood point $\eta:=(\delta\lambda,\ \lambda)\ (\lambda>0)$ can be expressed as 
  \begin{align*} 
    f_{p}(\eta,\theta)&=\lambda F_{p,\delta}(\theta),\quad 
    g_{p}(\eta,\theta)=\lambda G_{p,\delta}(\theta),\\
    I^{[p]}_{h,\pm}(\eta)=\int_{0}^{\frac{\pi}{2}}e^{i\lambda(\pm{H_{p,\delta}}(\theta))}
    &\psi^{[p]}(\theta)d\theta,\qquad H=F\text{ if }h=f,\ H=G\text{ if }h=g,
  \end{align*}
  in particular $F_{p,\delta}$ and $G_{p,\delta}$ are phase functions for the oscillatory integral $I^{[p]}_{h,\pm}$. \par
  Then, they can be written as 
  \begin{align}
    F'_{p,\delta}(\theta)&=-\frac{2}{p}\sin\theta\cos\theta(\delta\cos^{\frac{2}{p}-2}
    \theta-\sin^{\frac{2}{p}-2}\theta)=-\frac{2}{p}\sin\theta\cos\theta\ u_{1,\delta}
    (\theta),\label{F'}\\
    G'_{p,\delta}(\theta)&=\frac{2}{p}\sin\theta\cos\theta(\delta\sin^{\frac{2}{p}-2}
    \theta+\cos^{\frac{2}{p}-2}\theta)=:\frac{2}{p}\sin\theta\cos\theta\ v_{\delta}
    (\theta)\notag.
  \end{align}
  Note that we defined $u_{k,\delta}(\theta):=\delta\cos^{\frac{2}{p}-2k}
  \theta-\sin^{\frac{2}{p}-2k}\theta$ in general.\vspace{10pt}\par

  $(\ \mathrm{i}\ )$ For the case $p=2$,  
  \begin{equation*}
    F'_{2,\delta}(\theta)=-\delta\sin\theta+\cos\theta,\quad
    G'_{2,\delta}(\theta)=\delta\cos\theta+\sin\theta,
  \end{equation*}
  thus $F_{2,0}$ has only one stationary point, and $G_{2,\delta}$ has a stationary point $\theta=0$ only with $\delta=0$. \par
  In fact, 
  \begin{align*}
    F_{2,0}'(\theta)(=\cos\theta)=0\ &\iff\ \theta=\frac{\pi}{2},\\
    G_{2,\delta}'(\theta)=0\ &\iff\ \sin\theta=-\delta\cos\theta\ \iff\ 
    \delta=0,\ \theta=0
  \end{align*}
  hold, while on $\delta>0$ (that is, off-axis), from the composition of trigonometric functions, by taking $\theta_{\delta}\in(0,\frac{\pi}{2})$ such that 
  \begin{equation}\label{p=2_theta_delta}
    \cos\theta_{\delta}=\frac{1}{a_{2}(\delta)},\quad
    \sin\theta_{\delta}=\frac{\delta}{a_{2}(\delta)}\qquad
    \text{with }a_{2}(\delta):=\sqrt{\delta^{2}+1}
  \end{equation}
  are satisfied, we can confirm the following.
  \begin{equation*}
    F_{2,\delta}'(\theta)=0\ \iff\ a_{2}(\delta)(\cos\theta\cos\theta_{\delta}-
    \sin\theta\sin\theta_{\delta})=0\ \iff\ \cos(\theta+\theta_{\delta})=0 \  
    \iff\ \theta=\frac{\pi}{2}-\theta_{\delta}.
  \end{equation*}
  \par
  $(\ \mathrm{ii}\ )$ For the cases $0<p<1$, 
  \begin{align*}
    u_{1,0}(\theta)(=-\sin^{\frac{2}{p}-2}\theta)=0\ &\iff\ \theta=0,\\ 
    v_{0}(\theta)(=\cos^{\frac{2}{p}-2}\theta)=0\ &\iff\ \theta=\frac{\pi}{2},
  \end{align*}
  thus the stationary points for $F_{p,0}$ and $G_{p,0}$ are only $0$ and $\frac{\pi}{2}$, and while on $\delta>0$, as above, from the composition of trigonometric functions, by taking $\theta_{\delta}\in(0,\frac{\pi}{2})$ such that 
  \begin{equation}\label{0<p<1_theta_delta}
    \cos\theta_{\delta}=\frac{1}{a_{p}(\delta)},\quad
    \sin\theta_{\delta}=\frac{\delta^{\frac{-p}{2(1-p)}}}{a_{p}(\delta)}\qquad
    \text{with }a_{p}(\delta):=\sqrt{\delta^{\frac{-p}{1-p}}+1}
  \end{equation}
  are satisfied, we can confirm the following.
  \begin{align*}
    u_{1,\delta}(\theta)=0\ &\iff\ \delta^{\frac{p}{2(1-p)}}\cos\theta=\sin\theta\ \iff\ 
    -\delta^{\frac{-p}{2(1-p)}}\sin\theta+\cos\theta=0\\
    &\iff\ a_{p}(\delta)(\cos\theta\cos\theta_{\delta}-
    \sin\theta\sin\theta_{\delta})=0\ \iff\ \cos(\theta+\theta_{\delta})=0 \  
    \iff\ \theta=\frac{\pi}{2}-\theta_{\delta}.
  \end{align*}
  \hspace{13pt}Hence, the stationary points of $F_{p,\delta}$ are the three points of $0,\ \frac{\pi}{2}-\theta_{\delta}$ and $\frac{\pi}{2}$, while the statonary points of $G_{p,\delta}$ are only the two points of $0$ and $\frac{\pi}{2}$, from $v_{\delta}(\theta)>0$.\vspace{-20pt}\\
  \begin{table}[h]
    \caption{Stationary points for each the phase functions}
      \centering
      \begin{tabular}{|l|c|c|c|c|} \hline
         & $p=2\ (\delta=0)$ & $p=2\ (\delta>0)$ & $0<p<1,\ \frac{2}{p}
         \in\mathbb{N}\ (\delta=0)$ & $0<p<1,\ \frac{2}{p}\in\mathbb{N}\ (\delta>0)$ 
         \\ \hline
        $F_{p,\delta}$ & $\frac{\pi}{2}$ & $\frac{\pi}{2}-\theta_{\delta}$ & $0,\ \frac{\pi}{2}$ & $0,\ \frac{\pi}{2}-\theta_{\delta},\ 
        \frac{\pi}{2}$ \\
        $G_{p,\delta}$ & $0$ & None & $0,\ \frac{\pi}{2}$ & $0,\ 
        \frac{\pi}{2}$ \\ \hline
      \end{tabular}
  \end{table}
  \par
  Now, while we have obtained the stationary points in each case, we need to introduce the following lemma in addition to these in order to prove Theorem \ref{uni}.
  \begin{lemma}[\itshape{Van der Corput's Lemma: \cite{Duoandikoetxea}, Lemma 8.27. or 
  \cite{Stein-1993}, p334; Corollary}]\label{-1/k_order}\ \\
  \itshape{For a real-valued $C^{k}$($k\geq2$) function $\phi$ and $C^{1}$ function $\psi$ on the interval $[a,b]$, if $|\phi^{(k)}(x)|\geq1$ on the same interval, then the following holds.}
    \begin{equation*}
      \int_{a}^{b}e^{i\lambda\phi(x)}\psi(x)dx=\mathcal{O}(\lambda^{-\frac{1}{k}})\qquad
      \text{as }\lambda\to\infty.
    \end{equation*}
  \end{lemma}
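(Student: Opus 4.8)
The plan is to prove this van der Corput-type bound in two stages: first reduce to the amplitude-free case $\psi\equiv 1$, and then establish the latter by induction on $k$, keeping the implied constant \emph{uniform in the phase $\phi$} — this uniformity is exactly what makes the inductive rescaling go through.

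\textbf{Reduction to $\psi\equiv 1$.} Set $F(t):=\int_a^t e^{i\lambda\phi(x)}\,dx$. I would first prove that there is a constant $c_k>0$ depending only on $k$ with
\begin{equation*}
  \sup_{a\le s\le t\le b}\Bigl|\int_s^t e^{i\lambda\phi(x)}\,dx\Bigr|\le c_k\,\lambda^{-1/k},
\end{equation*}
the point being that every subinterval still satisfies $|\phi^{(k)}|\ge 1$, so the same bound applies there with the same constant; in particular $|F(t)|\le c_k\lambda^{-1/k}$ for all $t$, since $F(a)=0$. Integrating by parts (as in Lemma~\ref{-1order}, but now exploiting the uniform bound on $F$),
\begin{equation*}
  \int_a^b e^{i\lambda\phi}\psi\,dx=F(b)\psi(b)-\int_a^b F(x)\psi'(x)\,dx,
\end{equation*}
whence $\bigl|\int_a^b e^{i\lambda\phi}\psi\bigr|\le c_k\lambda^{-1/k}\bigl(|\psi(b)|+\int_a^b|\psi'(x)|\,dx\bigr)$. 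Since $\psi\in C^1[a,b]$, the bracketed quantity is a finite constant independent of $\lambda$, and the desired $\mathcal{O}(\lambda^{-1/k})$ follows. Thus it suffices to prove the displayed uniform estimate for $\psi\equiv 1$.

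\textbf{The uniform estimate, by induction on $k$.} For the base case I would record the first-derivative estimate with a \emph{phase-uniform} constant: if $\phi'$ is monotone and $|\phi'|\ge\mu>0$ on a subinterval, then integration by parts gives $\bigl|\int e^{i\lambda\phi}\bigr|\le 3(\lambda\mu)^{-1}$, because the two boundary terms are each $\le(\lambda\mu)^{-1}$ and, $1/\phi'$ being monotone, $\int|(1/\phi')'|=|1/\phi'(b)-1/\phi'(a)|\le\mu^{-1}$. For the inductive step, assume $|\phi^{(k)}|\ge 1$; then $\phi^{(k-1)}$ is strictly monotone (its derivative never vanishes), so it has at most one zero $c\in[a,b]$. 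Splitting $[a,b]$ into the near part $\{\,|x-c|\le\delta\,\}$, bounded trivially by $2\delta$, and the (at most two) far pieces on which $|\phi^{(k-1)}|\ge\delta$, on each far piece I rescale by writing $e^{i\lambda\phi}=e^{i(\lambda\delta)(\phi/\delta)}$ with $|(\phi/\delta)^{(k-1)}|\ge 1$ and applying the inductive hypothesis with parameter $\lambda\delta$, giving a contribution $\le 2c_{k-1}(\lambda\delta)^{-1/(k-1)}$. Hence $\bigl|\int_a^b e^{i\lambda\phi}\bigr|\le 2\delta+2c_{k-1}(\lambda\delta)^{-1/(k-1)}$, and choosing $\delta=\lambda^{-1/k}$ balances the two terms to yield the bound $c_k\lambda^{-1/k}$ with $c_k=2+2c_{k-1}$.

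\textbf{Main obstacle.} The delicate point is the inductive step, and in particular the insistence that $c_k$ depend only on $k$: because the argument re-applies the estimate to the \emph{rescaled} phase $\phi/\delta$, a constant secretly depending on $\phi$ would break the recursion. Tracking this uniformity forces the base case to be proved in the sharp form $3(\lambda\mu)^{-1}$, using the monotonicity of $\phi'$ to control the total variation of $1/\phi'$, rather than the merely qualitative $\mathcal{O}(\lambda^{-1})$ of Lemma~\ref{-1order}. The hypothesis $|\phi^{(k)}|\ge 1$ is precisely what supplies this monotonicity at each descent, since it prevents $\phi^{(k-1)}$ from vanishing more than once and guarantees the linear growth $|\phi^{(k-1)}|\ge\delta$ off the near set; once these structural facts are in place, the final optimization in $\delta$ is routine.
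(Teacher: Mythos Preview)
The paper does not prove this lemma; it is simply quoted as a known result with references to Duoandikoetxea (Lemma~8.27) and Stein (p.~334). Your argument is correct and is exactly the classical proof found in those sources: reduce to $\psi\equiv 1$ by integrating by parts against $F(t)=\int_a^t e^{i\lambda\phi}$, then induct on $k$ via the near/far splitting around the zero of $\phi^{(k-1)}$, with the rescaling $\phi\mapsto\phi/\delta$ and the crucial observation that $c_k$ depends only on $k$. Your emphasis on phase-uniformity of the constants is exactly the right point. One small omission worth patching: when $\phi^{(k-1)}$ has no zero in $[a,b]$ your $c$ is undefined; in that case take $c$ to be the endpoint at which $|\phi^{(k-1)}|$ is minimal, and the same lower bound $|\phi^{(k-1)}(x)|\ge |x-c|$ on the far set follows from the mean value theorem and $|\phi^{(k)}|\ge 1$, so the argument goes through verbatim.
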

  We note that, as shown in Table 1, for each phase function, there is a stationary point $\frac{\pi}{2}-\theta_{\delta}$ that depends on $\delta$ and are stationary points $0$ and $ \frac{\pi}{2}$ that are fixed independent of $\delta$, and we firstly focus on the latter. \par\vspace{5pt}
  We will see later the case $p=2$. In the case $0<p<1$ satisfying $\frac{2}{p}\in\mathbb{N}$, since the derivatives of 
  \begin{equation*}
    F_{p,0}(\theta)=\sin^{\frac{2}{p}}\theta,\quad G_{p,0}(\theta)=-\cos^{\frac{2}{p}}
    \theta\qquad\text{for }0\leq\theta\leq\frac{\pi}{2}
  \end{equation*}
  up to the $\frac{2}{p}$-th order can be expressed as 
  \begin{align*}
    F_{p,0}^{(n)}(\theta)&=\sum_{j=0}^{n}b_{n,j}\Bigl(\frac{2}{p},\cos\theta\Bigr)
    \sin^{\frac{2}{p}-j}\theta,\qquad G_{p,0}^{(n)}(\theta)=-\sum_{j=0}^{n}c_{n,j}\Bigl(
    \frac{2}{p},\sin\theta\Bigr)\cos^{\frac{2}{p}-j}\theta,\\
    b_{n,n}\Bigl(\frac{2}{p},\cos\theta\Bigr)&=
    \begin{cases}
      (\frac{2}{p})_{n}\cos\theta & \text{if }n\text{ : odd},\\
      (\frac{2}{p})_{n} & \text{if }n\text{ : even},
    \end{cases}
    \qquad
    c_{n,n}\Bigl(\frac{2}{p},\sin\theta\Bigr)=
    \begin{cases}
      (-1)^{n}(\frac{2}{p})_{n}\sin\theta & \text{if }n\text{ : odd},\\
      (-1)^{n}(\frac{2}{p})_{n} & \text{if }n\text{ : even},
    \end{cases}
  \end{align*}
  the following holds (note $(m)_{n}:=\prod_{j=0}^{n-1}(m-j)$).
  \begin{equation*}
    F_{p,0}^{(\frac{2}{p})}(0)=\Bigl(\frac{2}{p}\Bigr)!\neq0,\qquad
    G_{p,0}^{(\frac{2}{p})}(\frac{\pi}{2})=(-1)^{\frac{2}{p}+1}\Bigl(\frac{2}{p}\Bigr)!
    \neq0,
  \end{equation*}
  \begin{equation*}
    F_{p,0}''(\frac{\pi}{2})=-\frac{2}{p}\neq0,\qquad
    G_{p,0}''(0)=\frac{2}{p}\neq0.
  \end{equation*}
  Further from these, from the continuity of $F_{p,\delta}^{(n)}$ and $G_{p,\delta}^{(n)}$ with respect to $\theta$ and $\delta$, we can take $0<a<b<\frac{\pi}{2}$ and $0<\delta'<<1$ such that
  \begin{align}
    F_{p,\delta}^{(\frac{2}{p})}(\theta)&\neq0,\quad G_{p,\delta}''(\theta)\neq0,\qquad 
    \text{for }0\leq\theta\leq a,\ 0\leq\delta\leq\delta'\label{theta_a}\\
    G_{p,\delta}^{(\frac{2}{p})}(\theta)&\neq0,\quad F_{p,\delta}''(\theta)\neq0\ \ 
    \qquad\text{for }b\leq\theta\leq\frac{\pi}{2},\ 0\leq\delta\leq\delta'\label{theta_b},
  \end{align}
  are satisfied (note that $a,\ b$ and $\delta'$ are independent of each other). 
  Then, if we note the relationship between $|\eta|_{p}\backsimeq\lambda$ (that is, constant multiple) and apply Lemma \ref{-1/k_order} (on the integral intervals $[0,a],\ [b,\frac{\pi}{2}]$) and Lemma \ref{-1order} (on the integral interval $[a,b]$) to $0\leq\delta\leq\delta'$, we can obtain uniform evaluation formulas with respect to $0\leq\delta\leq\delta'$. 
  \begin{equation}
    I^{[p]}_{g,\pm}(\eta)=\mathcal{O}(|\eta|_{p}^{-\frac{p}{2}})\quad\text{as }|\eta|_{p}\to\infty,\text{ if }0<p<1\text{ such that }\frac{2}{p}\in\mathbb{N}.\label{0<p<1_g}
  \end{equation}
  \vspace{3pt}\par
  On the other hand, we need to also turn our attention to the cases where the remaining stationary point $\frac{\pi}{2}-\theta_{\delta}$, that is, the point dependent on $\delta$ appears (see Table 1). Firstly, we consider the case $p=2$ postponed above.\par
  For $F_{2,\delta}(\theta)=\delta\cos\theta+\sin\theta$ with $\delta>0$, from the way $\theta_{\delta}$ is defined (\ref{p=2_theta_delta}), the following holds.
  \begin{align*}
    F_{2,\delta}''(\theta)&=
    -(\delta\cos\theta+\sin\theta),\\
    F_{2,\delta}''(\frac{\pi}{2}-\theta_{\delta})&=-\delta\sin\theta_{\delta}-\cos
    \theta_{\delta}\\
    &=\frac{-1}{a_{2}(\delta)}(\delta^{2}+1)\neq0\quad(=\mathcal{O},\ \Omega(1)\quad\text{as }\delta\to0,
    \ \text{that is, }\theta_{\delta}\to0).
  \end{align*}\par
  In addition, since, for $G_{2,\delta}(\theta)=\delta\sin\theta-\cos\theta$,
  \begin{equation*}
    G_{2,\delta}''(\theta)=-(\delta\sin\theta-\cos\theta),\qquad G_{2,\delta}''(0)
    =1\neq0\quad(=\mathcal{O},\ \Omega(1)\quad\text{as }\delta\to0),
  \end{equation*}
  hold, we can take sufficiently small $a,\ \delta'>0$ satisfying the following (note $\frac{\pi}{2}-\theta_{\delta'}\leq\frac{\pi}{2}-\theta_{\delta}\leq\frac{\pi}{2}$).
  \begin{align*}
    F_{2,\delta}''(\theta)&\neq0\qquad\text{for }0\leq\delta\leq\delta',\ \frac{\pi}{2}-
    \theta_{\delta'}-a\leq\theta\leq\frac{\pi}{2},\\
    G_{2,\delta}''(\theta)&\neq0\qquad\text{for }0\leq\delta\leq\delta',\ 0\leq\theta\leq a.
  \end{align*}\par
  Therefore, by applying Lemma \ref{-1/k_order} (on the integral intervals $[\frac{\pi}{2}-\theta_{\delta'}-a,\frac{\pi}{2}]$,\ $[0,a]$) and Lemma \ref{-1order} (on the integral intervals $[0,\frac{\pi}{2}-
  \theta_{\delta'}-a]$,\ $[a,\frac{\pi}{2}]$) again, we obtain the following uniform evaluation formulas with respect to $0\leq\delta\leq\delta'$.
  \begin{equation}\label{p=2_h}
    I^{[2]}_{h,\pm}(\eta)=\mathcal{O}(|\eta|^{-\frac{1}{2}})\quad\text{as }|\eta|\to\infty,\text{ if }p=2,\ h=f\text{ or }g.
  \end{equation}
  \vspace{5pt}\par
  Finally, for the cases $0<p<1$ such that $\frac{2}{p}$ is a natural number greater than or equal to $3$, it is sufficient to show that the $\frac{2}{p}$-th derivative of $F_{p,\delta}$ is nonzero as $\delta$ → 0 on the $\delta$-dependent stationary point. In addition to this, we will also clarify the specific speed at which the derivatives of the orders less than $\frac{2}{p}$ approach zero as the following proposition.
  \begin{proposition}\label{Omega_1}
    \itshape{Let $\frac{2}{p}\in\mathbb{N}\setminus{\{1,2\}}$ and $\delta>0$. For $F_{p,\delta}$ and $
    \theta_{\delta}$ defined above, the following holds.} 
    \begin{equation*}
      F_{p,\delta}^{(n)}(\frac{\pi}{2}-\theta_{\delta})=
        \begin{cases}
        0 & \text{if }n=1,\\
        \mathcal{O}\Bigl(\delta^{\frac{\frac{2}{p}-n}{\frac{2}{p}(1-p)}}\Bigr) & 
        \text{if }1<n<\frac{2}{p},\\
        \mathcal{O},\ \Omega(1) & \text{if }n=\frac{2}{p},
        \end{cases}
      \qquad\text{as }\delta\to0.
    \end{equation*}
  \end{proposition}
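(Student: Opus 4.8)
The plan is to set $m:=\frac{2}{p}\in\mathbb{N}$ (so $m\ge 3$, hence $p\le\frac{2}{3}$ and $\cos^{m}\theta,\sin^{m}\theta$ are smooth in $\theta$), and to work at the single point $\theta^{*}:=\frac{\pi}{2}-\theta_{\delta}$ with the splitting $F_{p,\delta}^{(n)}(\theta)=\delta\,\frac{d^{n}}{d\theta^{n}}\cos^{m}\theta+\frac{d^{n}}{d\theta^{n}}\sin^{m}\theta$. The case $n=1$ is free: since $F_{p,\delta}'(\theta)=-\frac{2}{p}\sin\theta\cos\theta\,u_{1,\delta}(\theta)$ and $u_{1,\delta}(\frac{\pi}{2}-\theta_{\delta})=0$ by the very choice of $\theta_{\delta}$, we have $F_{p,\delta}'(\theta^{*})=0$. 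For $n\ge 2$ the key point is that $\theta^{*}\to 0$ as $\delta\to 0$, so both summands are controlled by the behaviour of $\cos^{m}$ and $\sin^{m}$ near the origin, and the whole matter reduces to pinning down the rate at which $\theta^{*}\to 0$.

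That rate I would read off from (\ref{0<p<1_theta_delta}): since $\sin\theta^{*}=\cos\theta_{\delta}=1/a_{p}(\delta)$ with $a_{p}(\delta)=\sqrt{\delta^{-p/(1-p)}+1}\ge\delta^{-p/(2(1-p))}$, we get $\theta^{*}\le\frac{\pi}{2}\sin\theta^{*}\le\frac{\pi}{2}\,\delta^{p/(2(1-p))}$, and the identity $\frac{2}{p}-2=\frac{2}{p}(1-p)$ (i.e. $m-2=\frac{2}{p}(1-p)$) turns the exponent into $\frac{p}{2(1-p)}=\frac{1}{m-2}$. Hence $\theta^{*}=\mathcal{O}(\delta^{1/(m-2)})$. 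Next, expanding $\sin^{m}\theta=\theta^{m}+\mathcal{O}(\theta^{m+2})$ near $0$ yields
\begin{equation*}
  \frac{d^{n}}{d\theta^{n}}\sin^{m}\theta=\frac{m!}{(m-n)!}\,\theta^{m-n}+\mathcal{O}(\theta^{m-n+2})\qquad(1\le n\le m,\ \theta\to 0),
\end{equation*}
while $\frac{d^{n}}{d\theta^{n}}\cos^{m}\theta$ is continuous, hence bounded, near $0$.

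Substituting $\theta=\theta^{*}$ then closes the argument. For $1<n<m$,
\begin{equation*}
  F_{p,\delta}^{(n)}(\theta^{*})=\mathcal{O}(\delta)+\mathcal{O}\bigl((\theta^{*})^{m-n}\bigr)=\mathcal{O}\bigl(\delta^{(m-n)/(m-2)}\bigr),
\end{equation*}
where the last equality uses $n\ge 2\Rightarrow\frac{m-n}{m-2}\le 1$ (so $\delta=\mathcal{O}(\delta^{(m-n)/(m-2)})$ as $\delta\to 0$), together with $\frac{m-n}{m-2}=\frac{\frac{2}{p}-n}{\frac{2}{p}(1-p)}$, the exponent appearing in the statement. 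For $n=m$ the $\sin^{m}$-part equals $m!+\mathcal{O}((\theta^{*})^{2})\to m!\ne 0$ while the $\cos^{m}$-part is $\delta\cdot\mathcal{O}(1)\to 0$, so $F_{p,\delta}^{(m)}(\theta^{*})\to m!$, giving both the $\mathcal{O}(1)$ and $\Omega(1)$ assertions.

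The one genuinely delicate point is the bookkeeping of the previous paragraph — turning the implicit description of $\theta_{\delta}$ into the clean $\theta^{*}\asymp\delta^{1/(m-2)}$ — together with the check that the $\delta\,\frac{d^{n}}{d\theta^{n}}\cos^{m}\theta$ term never overtakes $\delta^{(m-n)/(m-2)}$, which rests entirely on $\frac{m-n}{m-2}\le 1$ for $n\ge 2$, with equality precisely at $n=2$, where the two contributions are of the same order $\delta$ (still within the stated bound). If one prefers to bypass the Taylor expansion, the same estimates follow by inserting $\theta^{*}$ into the explicit formula $F_{p,0}^{(n)}(\theta)=\sum_{j=0}^{n}b_{n,j}(\frac{2}{p},\cos\theta)\sin^{\frac{2}{p}-j}\theta$ already recorded above: the dominant ($j=n$) term has size $\sin^{m-n}\theta^{*}\asymp\delta^{(m-n)/(m-2)}$ and coefficient $b_{n,n}(\frac{2}{p},\cos\theta^{*})\to(\frac{2}{p})_{n}\ne 0$, whose nonvanishing is exactly what makes the $n=m$ estimate two-sided.
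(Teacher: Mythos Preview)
Your argument is correct and considerably more direct than the paper's. The paper never uses the additive splitting $F_{p,\delta}=\delta\cos^{m}+\sin^{m}$ together with the Taylor expansion of $\sin^{m}\theta$ at $\theta=0$; instead it writes $F_{p,\delta}'(\theta)=-\tfrac{1}{p}\sin 2\theta\,u_{1,\delta}(\theta)$, applies the Leibniz rule to express $F_{p,\delta}^{(n)}$ as a sum over products of derivatives of $\sin 2\theta$ and of $u_{1,\delta}$, develops the two-term recursion $u_{k,\delta}''=(\tfrac{2}{p}-2k)\bigl(-(\tfrac{2}{p}-2k)u_{k,\delta}+(\tfrac{2}{p}-2k-1)u_{k+1,\delta}\bigr)$, and then tracks $u_{1,\delta}^{(j)}(\tfrac{\pi}{2}-\theta_{\delta})$ for each $j$, splitting further according to the parity of $n$. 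That machinery reproduces exactly the exponents $\frac{m-n}{m-2}$ you obtain in one line from $(\theta^{*})^{m-n}$ with $\theta^{*}\asymp\delta^{1/(m-2)}$.

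What your route buys is brevity and transparency: the single identity $m-2=\tfrac{2}{p}(1-p)$ does all the exponent bookkeeping, the $\delta\cos^{m}$ contribution is disposed of by boundedness plus $\tfrac{m-n}{m-2}\le 1$, and the $\Omega(1)$ at $n=m$ is immediate since the leading Taylor coefficient is $m!$. What the paper's route buys is slightly finer information along the way (it records separate $\mathcal{O}$ and $\Omega$ estimates for intermediate quantities such as $\sin 2(\tfrac{\pi}{2}-\theta_{\delta})\,u_{1,\delta}^{(2m)}(\tfrac{\pi}{2}-\theta_{\delta})$), but none of that extra precision is needed for the statement of the proposition itself. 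Your alternate paragraph invoking the expansion $F_{p,0}^{(n)}(\theta)=\sum_{j}b_{n,j}\sin^{m-j}\theta$ is also sound and is, in effect, a finite-sum version of the same Taylor argument.
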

  \hspace{13pt}\vspace{2pt}\par
  For the present, we admit that this proposition holds, and then we complete the proof of Theorem \ref{uni} by using the aforementioned results, followed by the proof of the proposition.
  \vspace{3pt}
  \begin{proof}[\text{Proof of Theorem \ref{uni}}]
    \hspace{13pt}In the cases $p$ such that $\frac{2}{p}\in\mathbb{N}\setminus{\{1,2\}}$, $\frac{\pi}{2}-\theta_{\delta}\to0$ holds as \ $\delta\to0$ from the way $\theta_{\delta}$ is defined in (\ref{0<p<1_theta_delta}). Then, we re-select $\delta'$ such that $\frac{\pi}{2}-\theta_{\delta}<a\ (\delta\in[0,\delta'])$ for $a$ in (\ref{theta_a}). Furthermore, by applying Lemma \ref{-1/k_order} (on the integral intervals $[0,a],\ [b,\frac{\pi}{2}]$) and Lemma \ref{-1order} (on the integral interval $[a,b]$) to the results ($\ref{theta_a}$), (\ref{theta_b}) and Proposition \ref{Omega_1}, we obtain uniform evaluation formulas
    \begin{equation*}
      I^{[p]}_{f,\pm}(\eta)=\mathcal{O}(|\eta|_{p}^{-\frac{p}{2}})\qquad\text{as }|\eta|_{p}\to\infty,\text{ if }0<p<1,\ \frac{2}{p}\in\mathbb{N}
    \end{equation*}
  with respect to $\delta\in[0,\delta']$. If we put this together with the already obtained evaluation formulas $(\ref{0<p<1_g})$ and $(\ref{p=2_h})$, then we can take $0<\delta'<<1$ for each $p$ satisfying $\frac{2}{p}\in\mathbb{N}$, and obtain uniform evaluation formulas
  \begin{equation*}
      I^{[p]}_{h,\pm}(\eta)=
      \begin{cases}
        \mathcal{O}(|\eta|_{p}^{-\frac{1}{2}}) & \text{if }p=2,\\
        \mathcal{O}(|\eta|_{p}^{-\frac{p}{2}}) & \text{if }0<p<1\text{ such that }\frac{2}{p}\in\mathbb{N},
      \end{cases}
      \qquad\text{as }|\eta|_{p}\to\infty,\text{ for }h=f\text{ or }g
    \end{equation*}
  on the right neighborhood $\{\eta=(\delta\lambda, \lambda)\in\mathbb{R}^{2}|\lambda>0,\ 0\leq\delta\leq\delta'\}$ with respect to the positive $y$-axis $(\mathrm{arg}(\eta)=\frac{\pi}{2})$. That is, from the display of $J_{0}^{[p]}$ (\ref{p-osc}), we obtain uniform evaluation formulas 
    \begin{equation}\label{near_uni}
      J_{0}^{[p]}(\eta)=
      \begin{cases}
        \mathcal{O}(|\eta|_{p}^{-\frac{1}{2}}) & \text{if }p=2,\\
        \mathcal{O}(|\eta|_{p}^{-\frac{p}{2}}) & \text{if }0<p<1\text{ such that }\frac{2}{p}\in\mathbb{N},
      \end{cases}
      \qquad\text{as }|\eta|_{p}\to\infty
    \end{equation}
   on the right-hand neighborhood of the axis $\mathrm{arg}(\eta)=\frac{\pi}{2}$. \par
   On the other hand, recalling that we have already obtained evaluation formulas uniformly on compacts on $\mathrm{arg}(\eta)\in(0,\frac{\pi}{2})$ as Theorem \ref{cpt.uni}, and furthermore, recalling that symmetry led us to the discussion on the first quadrant and the right neighborhood in the positive part of $y$-axis, we can conclude that the evaluation formulas (\ref{near_uni}) hold uniformly on $\mathbb{R}^{2}$ by similar discussion.
  \end{proof}
  \begin{remark}
    If we restrict $p$ to the cases where $\frac{2}{p}\in\mathbb{N}$ is odd, then the uniformly asymptotic evaluations of $J_{0}^{[p]}$ on $\mathbb{R}^{2}$ are obtained by a more concise argument (based on the oscillatory integral representation (\ref{odd-osc}) and the smoothness of the corresponding phase function and its higher-order derivatives) and Theorem \ref{cpt.uni}.
  \end{remark}
  
  \subsection{\normalsize{Proof of Proposition \ref{Omega_1}}}
  \hspace{13pt}Under the symbols and assumptions of Proposition \ref{Omega_1}, we make some preparations. Firstly, from the way the point $\theta_{\delta}$ is defined (\ref{0<p<1_theta_delta}), if we note
  \begin{equation*}
    \cos\theta_{\delta}=(\delta^{-\frac{p}{1-p}}+1)^{-\frac{1}{2}}(=\mathcal{O}(
    \delta^{\frac{p}{2(1-p)}})\quad\text{as }\delta\to0),\quad\sin\theta_{\delta}
    =(\delta^{\frac{p}{1-p}}+1)^{-\frac{1}{2}}(=\mathcal{O}(1)\quad\text{as }\delta\to0),
  \end{equation*}
  then it is clear that the following holds.
  \begin{equation}\label{cos2}
    \cos2(\frac{\pi}{2}-\theta_{\delta})=\sin^{2}\theta_{\delta}-\cos^{2}\theta_{\delta}
    =\mathcal{O}(1),\quad
    \sin2(\frac{\pi}{2}-\theta_{\delta})=2\cos\theta_{\delta}\sin\theta_{\delta}
    =\mathcal{O}(\delta^{\frac{p}{2(1-p)}})\qquad\text{as }\delta\to0.
  \end{equation}
  \hspace{13pt}Moreover, for the previously defined $u_{k,\delta}(\theta)=\delta\cos^{\frac{2}{p}-2k}\theta-
  \sin^{\frac{2}{p}-2k}\theta\ (k\in\mathbb{N})$, it can be expressed as 
  \begin{equation*}
    u_{k,\delta}'(\theta)=
    \begin{cases}
      (\frac{2}{p}-2k)(-\delta\sin\theta\cos^{\frac{2}{p}-(2k+1)}\theta-\cos\theta
      \sin\theta^{\frac{2}{p}-(2k+1)}\theta) & \text{if }\frac{2}{p}\geq2k+1,\\
      0 & \text{if }\frac{2}{p}=2k,
    \end{cases}
  \end{equation*}
  while for $\frac{2}{p}\geq2(k+1)$ it can be expressed as 
  \begin{align}
    u_{k,\delta}''(\theta)&=(\frac{2}{p}-2k)\Bigl(-\delta\cos^{\frac{2}{p}-2k}\theta+
    (\frac{2}{p}-(2k+1))\delta(1-\cos^{2}\theta)\cos^{\frac{2}{p}-2(k+1)}\theta\notag\\
    &\hspace{160pt}+\sin^{\frac{2}{p}-2k}\theta-(\frac{2}{p}-(2k+1))(1-\sin^{2}\theta)
    \sin^{\frac{2}{p}-2(k+1)}\theta\Bigr)\notag\\
    &=(\frac{2}{p}-2k)\Bigl(-u_{k}(\theta)+(\frac{2}{p}-(2k+1))(\delta\cos^{\frac{2}{p}
    -2(k+1)}\theta-\sin^{\frac{2}{p}-2(k+1)}\theta-(\delta\cos^{\frac{2}{p}-2k}\theta-
    \sin^{\frac{2}{p}-2k}\theta))\Bigr)\notag\\
    &=(\frac{2}{p}-2k)\Bigl(-(\frac{2}{p}-2k)u_{k,\delta}(\theta)+(\frac{2}{p}-2k-1)
    u_{k+1,\delta}(\theta)\Bigr),\label{u_rec}
  \end{align}
  so that for each $k$ the following holds.
  \begin{align}
  u_{k,\delta}(\frac{\pi}{2}-\theta_{\delta})&=\mathcal{O}(\delta)+\mathcal{O}
  (\delta^{\frac{p}{2(1-p)}\frac{2(1-kp)}{p}})\notag\\
  &=\mathcal{O}(\delta^{\frac{1-kp}{1-p}})
  \hspace{28pt}\text{as }\delta\to0,\text{ if }1\leq k,\text{ that is, }0\leq\frac{1-kp}{1-p}\leq 1, \label{u}\\
  u'_{k,\delta}(\frac{\pi}{2}-\theta_{\delta})&=-(\frac{2}{p}-2k)
  (\delta\cos\theta_{\delta}
  \sin^{\frac{2}{p}-(2k+1)}\theta_{\delta}-\sin\theta_{\delta}\cos^{\frac{2}{p}-(2k+1)}
  \theta_{\delta})\notag\\
  &=\mathcal{O}(\delta^{1+\frac{p}{2(1-p)}})+\mathcal{O}(\delta^{\frac{p}{2(1-p)}(
  \frac{2}{p}-(2k+1))})\notag\\
  &=\mathcal{O}(\delta^{\frac{2-p}{2(1-p)}})+\mathcal{O}(\delta^{\frac{2-(2k+1)p}{2(1-p)}
  })\notag\\
  &=\mathcal{O}(\delta^{\frac{2-(2k+1)p}{2(1-p)}})\quad\text{as }\delta\to0,\ \text{if }
  1\leq k\leq \frac{2-p}{2p},\text{ that is, }0\leq\frac{2-(2k+1)p}{2(1-p)}<1,\label{u'}\\
  u''_{k,\delta}(\frac{\pi}{2}-\theta_{\delta})&=\mathcal{O}(\delta^{\frac{1-pk}{1-p}})+
  \mathcal{O}(\delta^{\frac{1-(k+1)p}{1-p}})\notag\\
  &=\mathcal{O}(\delta^{\frac{1-(k+1)p}{1-p}})\hspace{13pt}\text{as }\delta\to0,\text{ if }
  1\leq k\leq \frac{1-p}{p},\text{ that is, }0\leq\frac{1-(k+1)p}{1-p}<1.\label{u''}
  \end{align}
  \hspace{13pt}On the other hand, recalling that the claim of Proposition \ref{Omega_1} is about the derivatives of $F_{p,\delta}$, we can express the desired $n(\leq\frac{2}{p})$-order derivatives from (\ref{F'}) and the Leibnitz rule as follows. Note that $a_{n,j},\ b_{n,j}\neq0$ are constant independent of $p,\ \delta$ and $\theta$.
  \begin{equation}\label{Lei}
    F_{p.\delta}^{(n)}(\theta)=
    \begin{cases}
      \displaystyle-\frac{1}{p}\sin2\theta\ u_{1,\delta}(\theta) & \text{if }n=1,\\
      \displaystyle-\frac{1}{p}\sum_{l=0}^{m-1}\Bigl(a_{n,2l}\cos2\theta\ u_{1,\delta}^{
      (2l)}(\theta)+a_{n,2l+1}\sin2\theta\ u_{1,\delta}^{(2l+1)}(\theta)\Bigr) & \text{if }n=2m,
      \\
      \displaystyle-\frac{1}{p}\sum_{l=0}^{m-1}\Bigl(b_{n,2l}\sin2\theta\ u_{1,\delta}^{
      (2l)}(\theta)+b_{n,2l+1}\cos2\theta\ u_{1,\delta}^{(2l+1)}(\theta)\Bigr)-
      \frac{1}{p}\sin2\theta\ u_{1,\delta}^{(2m)}(\theta) & \text{if }n=2m+1.
    \end{cases}
  \end{equation}
  Hence, the form of this expansion shows that we need to investigate the derivatives of $u_{1,\delta}$.
  \vspace{5pt}\par
  Firstly, in the cases $n=2m\ (m\geq2)$ or $n=2m+1$, for $0\leq l\leq m-1$, from (\ref{u_rec}) and (\ref{u}),
  \begin{align}
    u_{1,\delta}^{(2l)}(\frac{\pi}{2}-\theta_{\delta})=\mathcal{O}(
    u_{2,\delta}^{(2(l-1))}(\frac{\pi}{2}-\theta_{\delta}))
    &=\mathcal{O}(u_{3,\delta}^{(2(l-2))}(\frac{\pi}{2}-\theta_{\delta}))\notag\\
    &=\cdots\notag\\
    &=\mathcal{O}(u_{l+1,\delta}(\frac{\pi}{2}-\theta_{\delta}))\label{l+1}\\
    &=\mathcal{O}(\delta^{\frac{1-(l+1)p}{1-p}})\qquad\text{as }\delta\to0\notag
  \end{align}
  holds, and from this and (\ref{cos2}), in the cases $\frac{2}{p}=2m$ or $\frac{2}{p}=2m+1$, for $0\leq l\leq m-1$, we obtain the following evaluation formulas as $\delta\to0$.
  \begin{align}
    \cos2(\frac{\pi}{2}-\theta_{\delta})\ u_{1,\delta}^{(2l)}(\frac{\pi}{2}-
    \theta_{\delta})&=\mathcal{O}(\delta^{\frac{1-(l+1)p}{1-p}}),\label{u_2l_cos}\\
    \sin2(\frac{\pi}{2}-\theta_{\delta})\ u_{1,\delta}^{(2l)}(\frac{\pi}{2}-
    \theta_{\delta})&=\mathcal{O}(\delta^{\frac{p}{2(1-p)}+\frac{1-(l+1)p}{1-p}})
    =\mathcal{O}(\delta^{\frac{2-(2l+1)p}{2(1-p)}})\label{u_2l_sin}.
  \end{align}
  \par
  On the other hand, from (\ref{u'}), the evaluation formula
  \begin{equation}\label{u_rec_jk}
    u_{j,\delta}'(\frac{\pi}{2}-\theta_{\delta})=\mathcal{O}(u_{k,\delta}'(\frac{\pi}{2}-
    \theta_{\delta}))\qquad(j\leq k)\quad\text{ as }\delta\to0
  \end{equation}
  holds, and from this and (\ref{l+1}), in the cases $n=2m$ or $n=2m+1$ ($m\geq2$), for $0\leq l\leq m-2$, 
  \begin{align*}
    u_{1,\delta}^{(2l+1)}(\frac{\pi}{2}-\theta_{\delta})=\mathcal{O}(
    u_{l+1,\delta}'(\frac{\pi}{2}-\theta_{\delta}))
    &=\mathcal{O}(u_{m-1,\delta}'(\frac{\pi}{2}-\theta_{\delta}))\\
    &=\mathcal{O}\Bigl(\delta^{\frac{\frac{2}{p}-(2m-1)}{\frac{2}{p}(1-p)}}\Bigr)
    \qquad\text{as }\delta\to0
  \end{align*}
  holds. Thus, from this, the following two evaluation formulas are obtained.
  \begin{align}
    \cos2(\frac{\pi}{2}-\theta_{\delta})\ u_{1,\delta}^{(2l+1)}(\frac{\pi}{2}-
    \theta_{\delta})&=\mathcal{O}\Bigl(\delta^{\frac{\frac{2}{p}-(2m-1)}{\frac{2}{p}
    (1-p)}}\Bigr),\label{u_2l+1_cos}\\
    \sin2(\frac{\pi}{2}-\theta_{\delta})\ u_{1,\delta}^{(2l+1)}(\frac{\pi}{2}-
    \theta_{\delta})&=\mathcal{O}\Bigl(\delta^{\frac{1}{\frac{2}{p}(1-p)}+\frac{\frac{2}
    {p}-(2m-1)}{\frac{2}{p}(1-p)}}\Bigr)
    =\mathcal{O}\Bigl(\delta^{\frac{{\frac{2}{p}-2(m-1)}}{\frac{2}{p}(1-p)}}\Bigr)
    \qquad\text{as }\delta\to0\label{u_2l+1_sin}.
  \end{align}
  \par\vspace{7pt}
  Now, we only need to consider the remaining derivatives $u_{1,\delta}^{(2m-1)},\ u_{1,\delta}^{(2m)}$.\par\vspace{3pt}
  In the cases $n=2m\ (m\geq2)$ or $n=2m+1$, from (\ref{u_rec}) and (\ref{u_rec_jk}), the following holds.
  \begin{align}
    u_{1,\delta}^{(2m-1)}(\frac{\pi}{2}-\theta_{\delta})&=(\frac{2}{p}-2)(\frac{2}{p}-3)
    \cdots(\frac{2}{p}-(2m-1))u_{m,\delta}'(\frac{\pi}{2}-\theta_{\delta})+\mathcal{O}
    (u_{m-1,\delta}'(\frac{\pi}{2}-\theta_{\delta}))\notag\\
    &=(\frac{2}{p}-2)_{2(m-1)}
    u_{m,\delta}'(\frac{\pi}{2}-\theta_{\delta})+\mathcal{O}(\delta^{\frac{2-(2m-1)p}
    {2(1-p)}})\quad\text{as }\delta\to0.\label{u_2m-1}
  \end{align}\par
  Next, in the cases $\frac{2}{p}>2m+1$, by (\ref{l+1}) and (\ref{u''}),
  \begin{equation*}
    u_{1,\delta}^{(2m)}(\frac{\pi}{2}-\theta_{\delta})=\mathcal{O}(u_{m,\delta}''
    (\frac{\pi}{2}-\theta_{\delta}))=\mathcal{O}(\delta^{\frac{1-(m+1)p}{1-p}})
  \end{equation*}
  holds. Then, we note (\ref{cos2}) and (\ref{l+1}), and it is clear from the form of formulas for $u_{k,\delta}(\frac{\pi}{2}-
  \theta_{\delta})$ and $\sin2(\frac{\pi}{2}-\theta_{\delta})$ that the following evaluation holds, particularly with the addition of the $\Omega$ symbol.
  \begin{equation*}
    \sin2(\frac{\pi}{2}-\theta_{\delta})\ u_{1,\delta}^{(2m)}(\frac{\pi}{2}-
    \theta_{\delta})=\mathcal{O}(\delta^{\frac{p}{2(1-p)}+\frac{1-(m+1)p}{1-p}})
    =\mathcal{O},\ \Omega\Bigl(\delta^{\frac{\frac{2}{p}-(2m+1)}{\frac{2}{p}(1-p)}}\Bigr)
    \quad\text{as }\delta\to0.
  \end{equation*}
  \hspace{13pt}On the other hand, in the cases $\frac{2}{p}=2m+1$, since from
  \begin{align*}
    u_{m,\delta}(\theta)&=\delta\cos\theta-\sin\theta,\quad u_{m,\delta}''(\theta)=
    -u_{m,\delta}(\theta),\\
    u_{m,\delta}(\frac{\pi}{2}-\theta_{\delta})&=\delta\sin\theta_{\delta}-\cos
    \theta_{\delta}=\mathcal{O}(\delta)+\mathcal{O}(\delta^{\frac{p}{2(1-p)}})
    =\mathcal{O}(\delta^{\frac{p}{2(1-p)}})\qquad\text{as }\delta\to0,
  \end{align*}
  the evaluation formula
  \begin{equation*}
    \sin2(\frac{\pi}{2}-\theta_{\delta})u_{1,\delta}^{(2m)}(\frac{\pi}{2}-
    \theta_{\delta})
    =\mathcal{O}(\delta^{\frac{p}{2(1-p)}+\frac{p}{2(1-p)}})
    =\mathcal{O}(\delta^{\frac{p}{1-p}})\qquad\text{as }\delta\to0
  \end{equation*}
  holds, the above results can be summarized as follows.
  \begin{equation}\label{u_2m}
    \sin2(\frac{\pi}{2}-\theta_{\delta})u_{1,\delta}^{(2m)}(\frac{\pi}{2}-
    \theta_{\delta})=
    \begin{cases}
      \mathcal{O},\ \Omega\Bigl(\delta^{\frac{\frac{2}{p}-(2m+1)}{\frac{2}{p}(1-p)}}\Bigr)
       & \text{if }\frac{2}{p}>2m+1\\
      \mathcal{O}(\delta^{\frac{p}{1-p}}) & \text{if }\frac{2}{p}=2m+1
    \end{cases}
    \qquad\text{as }\delta\to0.
  \end{equation}
  \hspace{13pt}From the above, we are now ready to complete the proof of Proposition \ref{Omega_1}.
  \begin{proof}[\text{Proof of Proposition \ref{Omega_1}}]
    Let $\frac{2}{p}\in\mathbb{N}\setminus{\{1,2\}}$ and $1\leq n\leq\frac{2}{p}$.\par
  From the stationary point assumption (see Table 1), $F_{p,\delta}''(\theta)=-\frac{1}{p}(2\cos2\theta\ u_{1,\delta}(\theta)+\sin2\theta\ u_{1,\delta}'(\theta))$, (\ref{cos2}), (\ref{u}) and (\ref{u'}), 
    \begin{align*}
      F_{p,\delta}'(\frac{\pi}{2}-\theta_{\delta})&=0,\\
      F_{p,\delta}''(\frac{\pi}{2}-\theta_{\delta})&=\mathcal{O}(\delta)+\mathcal{O}
      (\delta^{\frac{p}{2(1-p)}+\frac{2-3p}{2(1-p)}})
      =\mathcal{O}(\delta)\Bigl(=\mathcal{O}\Bigl(\delta^{\frac{\frac{2}{p}-2}
      {\frac{2}{p}(1-p)}}\Bigr)\Bigr)\qquad\text{as }\delta\to0
    \end{align*}
    are obvious, so we consider the cases $3\leq n\leq\frac{2}{p}$ from now on.
    \vspace{10pt}\par

    (1) In the cases $n=2m\ (m\geq2)$, from (\ref{Lei}), (\ref{u_2l_cos}) and  (\ref{u_2l+1_sin}), the following holds.
    \begin{align*}
      F_{p,\delta}^{(2m)}(\frac{\pi}{2}-\theta_{\delta})=-\frac{a_{n,2(m-1)}}
      {p}&\cos2(\frac{\pi}{2}-\theta_{\delta})\ u_{1,\delta}^{(2(m-1))}(\frac{\pi}{2}-
      \theta_{\delta})\\
      &-\frac{a_{n,2m-1}}{p}\sin2(\frac{\pi}{2}-\theta_{\delta})\ u_{1,\delta}^{(2m-1)}
      (\frac{\pi}{2}-\theta_{\delta})+\mathcal{O}\Bigl(\delta^{\frac{\frac{2}{p}-2(m-1)}
      {\frac{2}{p}(1-p)}}\Bigr)\qquad\text{as }\delta\to0.
    \end{align*}
    Then, noting (\ref{l+1}), the first term on the right-hand side can be evaluated as 
    \begin{equation*}
      -\frac{a_{n,2(m-1)}}{p}\cos2(\frac{\pi}{2}-\theta_{\delta})\ 
      u_{1,\delta}^{(2(m-1))}(\frac{\pi}{2}-\theta_{\delta})=\mathcal{O},\ \Omega\Bigl(
      \delta^{\frac{\frac{2}{p}-2m}{\frac{2}{p}(1-p)}}\Bigr)\qquad\text{as }\delta\to0
    \end{equation*}
    (by the $\Omega$ symbol particularly) from (\ref{u_2l_cos}) and the form of the expressions of $u_{k,\delta}
    (\frac{\pi}{2}-\theta_{\delta})$ and $\cos2(\frac{\pi}{2}-\theta_{\delta})$, and for the second term, from (\ref{cos2}) and (\ref{u_2m-1}), the following holds.
    \begin{equation*}
      \sin2(\frac{\pi}{2}-\theta_{\delta})\ u_{1,\delta}^{(2m-1)}(\frac{\pi}{2}-
      \theta_{\delta})=(\frac{2}{p}-2)_{2(m-1)}\sin2(\frac{\pi}{2}-\theta_{\delta})\ 
      u_{m,\delta}'(\frac{\pi}{2}-\theta_{\delta})+\mathcal{O}
      \Bigl(\delta^{\frac{\frac{2}{p}-2(m-1)}{\frac{2}{p}(1-p)}}\Bigr)\quad\text{as }\delta\to0,
    \end{equation*}
    \hspace{13pt}In addition, combining (\ref{cos2}) and (\ref{u'}) again, that is,
    \begin{equation*}
      \sin2(\frac{\pi}{2}-\theta_{\delta})\ u_{m,\delta}'(\frac{\pi}{2}-\theta_{\delta})=
      \begin{cases}
        \mathcal{O}\Bigl(\delta^{\frac{\frac{2}{p}-2m}{\frac{2}{p}(1-p)}}\Bigr) & 
        \text{if }2m\leq\frac{2}{p}-1\\
        0 & \text{if }2m=\frac{2}{p}\\
      \end{cases}
      \qquad\text{as }\delta\to0,
    \end{equation*}
    then we obtain
    \begin{equation*}
      F_{p,\delta}^{(2m)}(\frac{\pi}{2}-\theta_{\delta})=
      \begin{cases}
        \mathcal{O}\Bigl(\delta^{\frac{\frac{2}{p}-2m}{\frac{2}{p}(1-p)}}\Bigr) & 
        \text{if }2m\leq\frac{2}{p}-1\\
        \mathcal{O},\ \Omega(1) & \text{if }2m=\frac{2}{p}
      \end{cases}
      \qquad\text{as }\delta\to0.
    \end{equation*}
    \vspace{5pt}\par
    (2) In the cases $n=2m+1\ (m\in\mathbb{N})$, from (\ref{Lei}), (\ref{u_2l_sin}) and 
    (\ref{u_2l+1_cos}), the following holds.
    \begin{align*}
      F_{p,\delta}^{(2m+1)}(\frac{\pi}{2}-\theta_{\delta})=-\frac{b_{n,2m-1}}{p}&
      \cos2(\frac{\pi}{2}-\theta_{\delta})\ u_{1,\delta}^{(2m-1)}(\frac{\pi}{2}-
      \theta_{\delta})\\
      &-\frac{b_{n,2m}}{p}\sin2(\frac{\pi}{2}-\theta_{\delta})\ u_{1,\delta}^{(2m)}
      (\frac{\pi}{2}-\theta_{\delta})+\mathcal{O}\Bigl(\delta^{\frac{\frac{2}{p}-(2m-1)}
      {\frac{2}{p}(1-p)}}\Bigr)\qquad\text{as }\delta\to0.
    \end{align*}
    Then, noting the form of each expression, the first term on the right-hand side can be evaluated as
    \begin{align*}
    \cos2(\frac{\pi}{2}-\theta_{\delta})\ u_{1,\delta}^{(2m-1)}(\frac{\pi}{2}-
    \theta_{\delta})&=(\frac{2}{p}-2)_{2(m-1)}\cos2(\frac{\pi}{2}-\theta_{\delta})\ 
    u_{m,\delta}'(\frac{\pi}{2}-\theta_{\delta})+\mathcal{O}\Bigl(\delta^{\frac{\frac{2}
    {p}-(2m-1)}{\frac{2}{p}(1-p)}}\Bigr)\\
    &=\mathcal{O},\ \Omega\Bigl(\delta^{\frac{\frac{2}{p}-(2m+1)}{\frac{2}{p}(1-p)}}\Bigr)+\mathcal{O}
    \Bigl(\delta^{\frac{\frac{2}{p}-(2m-1)}{\frac{2}{p}(1-p)}}\Bigr)\qquad\text{as }\delta\to0
    \end{align*}
    (by the $\Omega$ symbol particularly) from (\ref{cos2}), (\ref{u'}) and (\ref{u_2m-1}). \par
    Therefore, with the above and (\ref{u_2m}), we conclude.
    \begin{equation*}
       F_{p,\delta}^{(2m+1)}(\frac{\pi}{2}-\theta_{\delta})=
      \begin{cases}
        \mathcal{O},\ \Omega\Bigl(\delta^{\frac{\frac{2}{p}-(2m+1)}{\frac{2}{p}(1-p)}}\Bigr) & 
        \text{if }2m+1\leq\frac{2}{p}-1\\
        \mathcal{O},\ \Omega(1) & \text{if }2m+1=\frac{2}{p}
      \end{cases}
      \qquad\text{as }\delta\to0.
    \end{equation*}
  \end{proof}

  \section{Concluding remarks}
    \hspace{13pt}In order to achieve our future goal \textit{generalizing Proposition \ref{prop2022} (\cite{Kuratsubo-2022}, Lemma 5.1) for $p$ via Theorem \ref{thm} (series representations of $D_{\beta}^{[p]}-\mathcal{D}_{\beta}^{[p]}$ consisting of $J_{\beta+1}^{[p]}$)}, we need to identify the infimum of $\beta$ such that $\mathcal{D}_{\beta}^{[p]}(1:x)$ is integrable on $\mathbb{R}^{2}$, the assumption of Theorem \ref{thm}. In particular, considering the case of $\frac{2}{p}\in\mathbb{N}\setminus{\{1,2\}}$, if we can find infimum $Q^{[p]}$ of $q_{\omega}^{[p]}>0$ (particularly $q_{0}^{[p]}=\frac{p}{2}$) which satisfy uniform evaluation formulas on $\mathbb{R}^{2}$
  \begin{equation}\label{q-order}
    J_{\omega}^{[p]}(x)\stackrel{\text{unif}}{=}\mathcal{O}(|x|_{p}^{-q^{[p]}_{\omega}})\qquad\text{as }|x|_{p}\to\infty,\quad\text{for }\omega\geq0,
  \end{equation}
  then we also obtain the desired $\beta$. Note that, unlike $J_{\alpha}(r)=\mathcal{O}(r^{-\frac{1}{2}})$, we generally consider that the exponents depend on the orders.\par
  In fact, under $\beta>1-Q^{[p]}$, since there exists $\varepsilon>0$ satisfying $\beta+1>2-Q^{[p]}+\varepsilon$, the following holds by $|x|_{p}^{-(\beta+1)}<|x|_{p}^{-(2-Q^{[p]}+\varepsilon)}$.
  \begin{equation}\label{frac-est}
    \frac{J_{\beta+1}^{[p]}(x)}{|x|_{p}^{\beta+1}}=\mathcal{O}(|x|_{p}^{-q_{\beta+1}^{[p]}-(2-Q^{[p]}+\varepsilon)})=\mathcal{O}(|x|_{p}^{-(2+\varepsilon)})=\mathcal{O}(|x|^{-(2+\varepsilon)}).
  \end{equation}\par
  Also, from Proposition 2.6 of \cite{K1} (series representation of $J_{\omega}^{[p]}$), for $x\neq0$, 
  \begin{align*}
    \frac{J_{\omega}^{[p]}(x)}{|x|_{p}^{\omega}}&=\frac{4}{p^{\omega+2}\Gamma^{2}(\frac{1}{p})}\left(\frac{\Gamma^{2}(\frac{1}{p})}{\Gamma(\frac{2}{p}+\omega)}+\sum_{k=1}^{\infty}\frac{(-1)^{k}}{\Gamma(\frac{2(k+1)}{p}+\omega)}\sum_{m\in\mathbb{N}_{0}^{2}\ |m|'=k}\frac{\Gamma(\frac{2m+1}{p})}{(2m)!}x^{2m}\right)\\
    &\to\frac{4}{p^{\omega+2}\Gamma^{2}(\frac{1}{p})}\left(\frac{\Gamma^{2}(\frac{1}{p})}{\Gamma(\frac{2}{p}+\omega)}+0\right)\qquad\text{as }x\to0
  \end{align*}
  holds (note the notation of multiple exponents), and $\frac{J_{\omega}^{[p]}(x)}{|x|_{p}^{\omega}}$ is continuous on $\mathbb{R}^{2}$ if we redefine the value of the function at the origin as follows.
  \begin{equation}
    \frac{J_{\omega}^{[p]}(x)}{|x|_{p}^{\omega}}:=\frac{(\frac{2}{p})^{2}}{p^{\omega}\Gamma(\omega+\frac{2}{p})}\qquad\text{for }x=0.
  \end{equation}\par
  Hence, by the uniformly asymptotic evaluation formulas (\ref{frac-est}) and continuity of $\frac{J_{\omega}^{[p]}(x)}{|x|_{p}^{\omega}}$ on $\mathbb{R}^{2}$, there exist a closed sphere $B$ of origin center and constants $C_{B},\ C'_{B}>0$, and the following holds.
  \begin{equation*}
    \int_{\mathbb{R}^{2}}\left|\frac{J_{\omega}^{[p]}(x)}{|x|_{p}^{\omega}}\right|dx
    =\int_{B}\left|\frac{J_{\omega}^{[p]}(x)}{|x|_{p}^{\omega}}\right|dx+\int_{\mathbb{R}^{2}\setminus B}\left|\frac{J_{\omega}^{[p]}(x)}{|x|_{p}^{\omega}}\right|dx\leq C_{B}+C'_{B}\int_{\mathbb{R}^{2}\setminus B}\frac{dx}{|x|^{2+\varepsilon}}<+\infty.
  \end{equation*}\par\vspace{10pt}
  From the above, $\mathcal{D}_{\beta}^{[p]}(1:x)\ (\backsimeq_{\beta,p}\frac{J_{\beta+1}^{[p]}(2\pi x)}{|2\pi x|_{p}^{\beta+1}};\ \cite{K1},\text{ Proposition 3.1})$ is integrable on $\mathbb{R}^{2}$, that is, the infimum of $\beta$ satisfying the assumption of Theorem \ref{thm} is identified as $1-Q^{[p]}$. Then, it can be written as a special case of Theorem \ref{thm} as follows. Thus, by this theorem, tackling a part of the problem is attributed to investigating the uniformly asymptotic evaluations of the generalized Bessel functions $J_{\omega}^{[p]}$.  
  \begin{theorem}[\itshape{\textit{Astroid-type} versions of Theorem \ref{thm}}]\label{thm_At}
  \itshape{Let $p>0$ such that $\frac{2}{p}\in\mathbb{N}\setminus{\{1,2\}}$. If there exists $Q^{[p]}:=\inf_{\omega\geq0}q_{\omega}^{[p]}$ satisfying $J_{\omega}^{[p]}(x)\stackrel{\text{unif}}{=}\mathcal{O}(|x|_{p}^{-q_{\omega}^{[p]}})$, then the following holds for $\beta>1-Q^{[p]}$.
  \begin{equation*}
    D_{\beta}^{[p]}(s:x)-\mathcal{D}_{\beta}^{[p]}(s:x)=s^{\beta+\frac{2}{p}}p^{\beta+1}
    \Gamma^{2}(\frac{1}{p})\sum_{n\in\mathbb{Z}^{2}\setminus\{0\}}\frac{J_{\beta+1}^{[p]}
    (2\pi\sqrt[p]{s}(x-n))}{(2\pi \sqrt[p]{s}|x-n|_{p})^{\beta+1}}
    \qquad\text{for }s>0,\ x\in\mathbb{R}^{2}.\vspace{-5pt}
  \end{equation*}}
  Furthermore, under this assumption, the series converges absolutely for $x\in\mathbb{T}^{2}$.
  \end{theorem}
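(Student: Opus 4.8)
The plan is to derive Theorem \ref{thm_At} as a direct corollary of Theorem \ref{thm}: the only thing to check is that its hypothesis — integrability of $\mathcal{D}_{\beta}^{[p]}(1:x)$ on $\mathbb{R}^{2}$ — holds whenever $\beta>1-Q^{[p]}$. First I would invoke Proposition 3.1 of \cite{K1}, which identifies $\mathcal{D}_{\beta}^{[p]}(1:x)$ with a constant (depending only on $\beta$ and $p$) times $J_{\beta+1}^{[p]}(2\pi x)/|2\pi x|_{p}^{\beta+1}$; hence it suffices to prove that $x\mapsto J_{\beta+1}^{[p]}(x)/|x|_{p}^{\beta+1}$ belongs to $L^{1}(\mathbb{R}^{2})$. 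I would split this integral over a closed ball $B$ centred at the origin and over its complement, and estimate each piece separately.

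For the piece over $B$ I would use the convergent power series expansion of $J_{\omega}^{[p]}$ from Proposition 2.6 of \cite{K1}: dividing by $|x|_{p}^{\omega}$ removes the leading homogeneity and leaves a power series in $x$ whose constant term is $(\tfrac{2}{p})^{2}/(p^{\omega}\Gamma(\omega+\tfrac{2}{p}))$. After redefining the value at the origin accordingly, $J_{\omega}^{[p]}(x)/|x|_{p}^{\omega}$ becomes continuous on all of $\mathbb{R}^{2}$, in particular bounded on the compact set $B$, so $\int_{B}\bigl|J_{\beta+1}^{[p]}(x)/|x|_{p}^{\beta+1}\bigr|\,dx<+\infty$.

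For the piece over $\mathbb{R}^{2}\setminus B$ I would exploit the hypothesis directly: since $\beta>1-Q^{[p]}$ there is $\varepsilon>0$ with $\beta+1>2-Q^{[p]}+\varepsilon$, and combining the uniform estimate $J_{\beta+1}^{[p]}(x)=\mathcal{O}(|x|_{p}^{-q_{\beta+1}^{[p]}})$ with $q_{\beta+1}^{[p]}\geq Q^{[p]}$ gives $J_{\beta+1}^{[p]}(x)/|x|_{p}^{\beta+1}=\mathcal{O}\bigl(|x|_{p}^{-(q_{\beta+1}^{[p]}+\beta+1)}\bigr)=\mathcal{O}(|x|_{p}^{-(2+\varepsilon)})$. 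Using the equivalence of $|\cdot|_{p}$ and the Euclidean norm on $\mathbb{R}^{2}$, the tail is then dominated by $\int_{\mathbb{R}^{2}\setminus B}|x|^{-(2+\varepsilon)}\,dx<+\infty$. Adding the two contributions shows that $\mathcal{D}_{\beta}^{[p]}(1:x)$ is integrable on $\mathbb{R}^{2}$, so Theorem \ref{thm} applies verbatim and yields both the series identity and its absolute convergence on $\mathbb{T}^{2}$. I expect the only genuinely delicate point to be the continuity-at-the-origin step, where one must check that the termwise bounds on the power series of $J_{\omega}^{[p]}$ are uniform enough to justify passing to the limit $x\to 0$; the exponent bookkeeping and the norm comparison are routine.
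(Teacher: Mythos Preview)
Your proposal is correct and follows essentially the same route as the paper: both reduce Theorem~\ref{thm_At} to verifying the integrability hypothesis of Theorem~\ref{thm} by invoking Proposition~3.1 of \cite{K1} to rewrite $\mathcal{D}_{\beta}^{[p]}(1:x)$ in terms of $J_{\beta+1}^{[p]}(2\pi x)/|2\pi x|_{p}^{\beta+1}$, then split the integral into a compact piece (handled via the series expansion of Proposition~2.6 of \cite{K1} and continuity at the origin) and a tail piece (handled via the uniform decay hypothesis, the choice of $\varepsilon>0$ with $\beta+1>2-Q^{[p]}+\varepsilon$, and the equivalence of $|\cdot|_{p}$ with the Euclidean norm). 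The exponent bookkeeping and the redefinition at the origin are carried out identically in the paper.
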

  \vspace{7pt}
  Therefore, as the next step of our research, we aim to obtain the uniformly asymptotic evaluations (\ref{q-order}) and the infimum $Q^{[p]}$, but for our future purpose mentioned above, we infer that the evaluations for the cases $\omega\in\mathbb{N}$, at least for $\omega=2$, is sufficient. \par
  Thus, we would like to derive the corresponding representations for general $p$ (oscillatory integral representations of $J_{n}^{[p]}$) to the ones of the Bessel functions(\cite{Watson}, p19-p20) 
  \begin{equation}\label{OIRBF}
    J_{n}(r)=\frac{1}{2\pi}\int_{0}^{2\pi}e^{i(r\sin\theta-n\theta)}d\theta\qquad\text{for }n\in\mathbb{N}.
  \end{equation}\par
  It is important to note that the order $n$ is only included in the phase function, and if this derivation succeeds, then the uniformly asymptotic evaluations (\ref{q-order}), for the cases when $\omega$ are natural numbers, and $Q^{[p]}_{\mathbb{N}}:=\inf_{n\in\mathbb{N}}q_{n}^{[p]}$ are immediately obtained by applying Van der Corput's lemma (Lemma \ref{-1/k_order}).
  \par\vspace{5pt}
  The derivation of the oscillatory integral representations (\ref{OIRBF}) based on the Laurent expansion 
  \begin{equation*}
  e^{\frac{1}{2}r(z-\frac{1}{z})}=\sum_{m=-\infty}^{\infty}J_{m}(r)z^{m}\qquad\text{for }r>0,\ z\in\mathbb{C}\setminus\{0\}
  \end{equation*}
  is well known. Therefore, we follow this method, and from the oscillatory integral representation (\ref{odd-osc}) under the restricted condition such that $\frac{2}{p}\in\mathbb{N}$ is odd, infer that the following equality holds, which we will endeavor to prove it in next studies. Note that we define $\frac{2}{p}=:2j+1,\ \psi_{j}(z):=(-\frac{1}{16})^{j}(z^{2}-(\frac{1}{z})^{2})^{2j}$.
  \begin{equation}
    e^{\eta_{1}(-1)^{j}(\frac{1}{2}(z-\frac{1}{z}))^{\frac{2}{p}}+i\ \eta_{2}(\frac{1}{2}(z+\frac{1}{z}))^{\frac{2}{p}}}\psi_{j}(z)\stackrel{\text{??}}{=}\sum_{m=-\infty}^{\infty}J_{m}^{[p]}(\eta)z^{m}\qquad\text{for }\eta\in\mathbb{R}^{2},\ z\in\mathbb{C}\setminus\{0\}.
  \end{equation}

  \section*{Acknowledgement}
  \hspace{13pt}I would like to express my gratitude to Prof. Mitsuru Sugimoto and Prof. Kohji Matsumoto for numerous constructive suggestions and helpful remarks on harmonic analysis and number theory. \par
The author is financially supported by JST SPRING, Grant Number JPMJSP2125, and would like to take this opportunity to thank the ``THERS Make New Standards Program for the Next Generation Researchers.'' 
  \addcontentsline{toc}{section}{References}
   
  \itshape{
  \hspace{13pt}The author's affiliation: Graduate School of Mathematics, Nagoya University, Chikusa-ku, Nagoya 464-8602, Japan\\
  \hspace{13pt}The author's email address: kitajima.masaya.z5@s.mail.nagoya-u.ac.jp}
\end{document}